\documentclass[12pt]{amsart}
\usepackage[utf8]{inputenc}
\usepackage[a4paper,inner=2.5cm,outer=2.5cm,top=2.5cm,bottom=2.5cm]{geometry}
\usepackage{amsmath,amssymb,amsthm}

\usepackage{hyperref}
\hypersetup{colorlinks=true,linkcolor=blue,citecolor=teal,
filecolor=magenta,urlcolor=cyan}

\usepackage{tikz}
\usetikzlibrary{calc,decorations.markings,arrows.meta}
\tikzset{baseline={([yshift=-.5ex]current bounding box.center)}}
\tikzstyle directed=[blue,postaction={decorate,decoration={markings, mark= at position .55 with {\arrow{stealth}}}}]
\tikzstyle{v} = [draw,black,fill,circle, inner sep=1pt]
\tikzstyle{->}=[-{Stealth[]}]
\tikzstyle{<->}=[{Stealth[]}-{Stealth[]}]
\tikzstyle{<-}=[{Stealth[]}-]

\usepackage{xcolor}
\newcommand{\mkaz}[1]{{\textcolor{blue}{#1}}}

\newcommand{\zyan}[1]{{\textcolor{red}{#1}}}

\newtheorem{theorem}{Theorem}[section]

\newtheorem{corollary}[theorem]{Corollary}
\newtheorem{proposition}[theorem]{Proposition}

\theoremstyle{definition}
\newtheorem{definition}[theorem]{Definition}
\newtheorem{remark}[theorem]{Remark}

\newcommand{\C}{{\mathbb{C}}}
\newcommand{\Q}{{\mathbb{Q}}}
\newcommand{\Z}{{\mathbb{Z}}}
\newcommand{\fg}{{\mathfrak{g}}}
\newcommand{\fgl}{{\mathfrak{gl}}}
\newcommand{\fsl}{{\mathfrak{sl}}}
\newcommand{\fso}{{\mathfrak{so}}}
\newcommand{\fsp}{{\mathfrak{sp}}}
\newcommand{\fosp}{{\mathfrak{osp}}}
\newcommand{\fp}{{\mathfrak{p}}}
\newcommand{\fq}{{\mathfrak{q}}}

\DeclareMathOperator{\Tr}{Tr}

\title{Universal Polynomial $\fso$ Weight System}

\author[M.~Kazarian]{Maxim~Kazarian}
\address{M.~K.: Faculty of Mathematics, National Research University Higher School of Economics, Usacheva 6, 119048 Moscow, Russia; and Center for Advanced Studies, Skoltech, Nobelya 1, 143026, Moscow, Russia}
\email{kazarian@mccme.ru}

\author[Zh.~Yang]{Zhuoke Yang}
\address{Zh.~Y.: Beijing Institute of Mathematical Sciences and Applications, 101407, Beijing, China}
\email{yangzhuoke@bimsa.cn}

\date{January 2023}

\begin{document}

\maketitle

\begin{abstract}
We introduce a universal weight system (a function on chord diagrams satisfying the $4$-term relation) taking values in the ring of polynomials in infinitely many variables whose particular specializations are weight systems associated with the Lie algebras $\fso(N)$, $\fsp(2M)$, as well as Lie superalgebras $\fosp(N|2M)$. We extend this weight system to permutations and provide an efficient recursion for its computation.

The construction for this weight system extends a similar construction for the universal polynomial weight system responsible for the Lie algebras $\fgl(N)$ and superalgebras $\fgl(N|M)$ introduced earlier by the second named author.
\end{abstract}

\section{Introduction}

\subsection{Weight systems and Vassiliev knot invariants}
Below, we use standard notions from the theory of finite order knot invariants;
see, e.g.~\cite{Chmutov2012,lando2013graphs}.

A {\it chord diagram\/} of order~$m$ is an oriented circle (called the \emph{Wilson loop}) endowed with~$2m$
pairwise distinct points split into~$m$ disjoint pairs, considered up to
orientation-preserving diffeomorphisms of the circle. These pairs of points are usually depicted as chords.

A~{\it weight system\/} is a function $w$ on chord diagrams satisfying the $4$-term
relations; see Fig.~\ref{fourtermrelation}.
\def\chord(#1,#2);{
\coordinate (->) at ($(0,0)!1!#1:(1,0)$);
\coordinate (b) at ($(0,0)!1!#2:(1,0)$);
\draw (->) .. controls ($0.35*(->) + 0.35*(b)$) .. (b);}
\newcommand{\Tfour}[1]{\begin{tikzpicture}[scale=.8]
\draw[dashed] (0,0) circle [radius=1];
\draw[thick] (30:1) arc [start angle=30, end angle=60, radius=1]
        (120:1) arc [start angle=120, end angle=150, radius=1]
        (260:1) arc [start angle=260, end angle=290, radius=1];
#1
\end{tikzpicture}}
\begin{figure}[ht]
$$
    w\Biggl(~\Tfour{\chord(45,269); \chord(135,281);}~\Biggr)
   -w\Biggl(~\Tfour{\chord(45,280); \chord(135,270);}~\Biggr)=
   w\Biggl(~\Tfour{\chord(51,275); \chord(135,39);}~\Biggr)-
   w\Biggl(~\Tfour{\chord(40,275); \chord(135,50);}~\Biggr)
$$
\caption{$4$-term relations}
\label{fourtermrelation}
\end{figure}

In figures, the outer circle of the chord diagram is always assumed to be
oriented counterclockwise. Dashed arcs may contain ends of arbitrary sets of chords, same for all the four terms in the picture. Weight systems play a crucial role in the construction of Vassiliev knot invariants.

\subsection{Universal polynomial $\fgl$ weight system}

There is a general construction for a weight system $w_\fg$ associated with an arbitrary Lie algebra~$\fg$ equipped with an ${\rm ad}$-invariant nondegenerate symmetric bilinear form~\cite{kon1993,bar1995vassiliev}. It takes values in the center on the universal enveloping algebra $U\fg$. For the case $\fg=\fgl(N)$, this center is freely generated by the Casimir elements $C_{\fgl(N),1},\dots,C_{\fgl(N),N}$, where
\begin{equation}\label{eq:glCasimir}
C_{\fgl(N),k}=\sum\limits_{i_1,i_2,\dots, i_k=1}^{N}E_{i_1i_2}E_{i_2i_3}\dots E_{i_ki_1}.
\end{equation}
Here $E_{i\,j}\in\fgl(N)$ is the matrix unit having~$1$ on the intersection of the $i$th row and the $j$th column and~$0$ elsewhere, $i,j=1,\dots,N$.

Thus, $w_{\fgl(N)}$ takes values in the commutative ring $\C[C_{\fgl(N),1},\dots,C_{\fgl(N),N}]$ of polynomials in $N$ independent variables. These weight systems are  quite strong and have a number of important applications. However, the direct computation of this weight system from its definition is rather laborious, since one has to deal with a complicated noncommutative algebra~$U\fgl(N)$. In addition, for a fixed chord diagram $D$ the complexity of computation of~$w_{\fgl(N)}(D)$ grows very rapidly with the growth of~$N$. This difficulty was overcome in~\cite{Y1} by presenting a construction for a universal polynomial weight system, denoted by~$w_\fgl$, possessing the following properties (we recall the definition of~$w_\fgl$ below in the next section):
\begin{itemize}
\item $w_\fgl$ takes values in the ring $\C[C_0,C_1,\dots]$ of polynomials in infinitely many generators. For a given chord diagram~$D$ with $k$ chords, the polynomial $w_{\fgl}(D)$ involves the variables $C_0,\dots,C_k$ only;

\item $w_\fgl$ is a weight system indeed, that is, it does satisfy the $4$-term relations;

\item for any $N\ge1$, the weight system~$w_{\fgl(N)}$ can be obtained from $w_\fgl$ by the specialization taking~$C_0$ to~$N$ and~$C_k$ to the corresponding Casimir element $C_{\fgl(N),k}$ for $k\ge1$.
\end{itemize}

\begin{remark}
The specialization taking~$w_\fgl$ to $w_{\fgl(N)}$ may involve the Casimir elements $C_{\fgl(N),k}$ defined by~\eqref{eq:glCasimir} for $k>N$. These elements also belong to the center of~$U\fgl(N)$, and they can be expressed as polynomials in $C_{\fgl(N),1},\dots,C_{\fgl(N),N}$. These polynomial expressions for higher Casimirs can be obtained from the following formula due to Perelomov and Popov, which identifies the ring $ZU\fgl(N)=\C[C_{\fgl(N),1},\dots,C_{\fgl(N),N}]$ with the ring of symmetric functions in the indeterminates $x_1,\dots,x_N$, see~\cite{Perelomov-Popov} and~\cite[\S60]{Zh}:
$$
1-N\,u-\sum_{k=1}^\infty C_{\fgl(N),k}\,u^{k+1}=\prod_{i=1}^N\left(1-\frac{1}{1-x_iu}\right).
$$
Equating the first $N$ coefficients in these generating series allows one to identify $C_{\fgl(N),1},\dots,C_{\fgl(N),N}$ as the generators of the ring of symmetric functions while the remaining terms of this equality can be used to express $C_{\fgl(N),k}$ in terms of these generators for~$k>N$.
\end{remark}

Existence of~$w_\fgl$ implies that for a fixed chord diagram~$D$ with $k$ chords and $N>k$ the invariant $w_{\fgl(N)}(D)$ written as a polynomial in the Casimir elements $C_i=C_{\fgl(N),i}$ involves the generators $C_1,\dots,C_k$ only, and moreover, the coefficients of this polynomial depend on~$N$ in a polynomial way. This fact is not obvious at all from the definition of~$w_{\fgl(N)}$.

The weight system associated with the Lie algebra~$\fsl(N)$ can also be obtained from the same universal one $w_\fgl$ by setting additionally $C_1=0$, which is not surprising, see~\cite{Y1},~\cite{KL}. What is much more surprising is that the same universal weight system~$w_\fgl$ contains also the values of the weight system associated with the Lie superalgebras $\fgl(N|M)$ for all~$N$ and~$M$. A construction of the weight system associated with a metrized Lie superalgebra is parallel to the one associated with a metrized Lie algebra, see~\cite{Chmutov2012,FKV97}. The main result of~\cite{Y2} claims that for the case of the Lie superalgebra~$\fgl(N|M)$ the weight system $w_{\fgl(N|M)}$ can be obtained from $w_\fgl$ by the specialization taking $C_0$ to $N-M$ and $C_k$ to the corresponding Casimir elements. The Casimir elements of $\fgl(N|M)$ generate the center of~$U\fgl(N|M)$, and relations among them are implied by the following analog of the Perelomov-Popov formula, which identifies the center of $U\fgl(N|M)$ with the ring of supersymmetric functions in the indeterminates $x_1,\dots,x_N,y_1,\dots,y_M$:
$$
1-(N-M)\,u-\sum_{k=1}^\infty C_{\fgl(N|M),k}\,u^{k+1}=
\prod_{i=1}^N\frac{1-(1+x_i)\,u}{1-x_i u}.
\prod_{j=1}^M\frac{1+y_j\,u}{1-(1-y_j)u}.
$$

 The goal of this paper is to find  the analogues of the weight system~$w_\fgl$ for other classical series of Lie algebras and superalgebras, and to establish relationship between these universal polynomial invariants.

 \subsection{Defining relations for the universal polynomial $\fgl$ weight system}

 The weight system~$w_\fgl$ is defined in~\cite{Y1} through a number of relations leading to its explicit recursive computation. These relations exit, in fact, the set of chord diagrams, and $w_\fgl$ is actually defined as a \emph{function on the set of permutations} (of arbitrary number of elements). In particular, a chord diagram with~$M$ chords is interpret as an involution without fixed points on the $2M$ element set of the ends of the chords. The permutation in question swaps the ends of the chords; the points being ordered counterclockwise starting from arbitrarily chosen cut point on the circle (the value of~$w_\fgl$ is independent of the choice of the cut point).

 The definition of the invariant $w_\fgl$ presented below is motivated by the requirement that its specialization to $w_{\fgl(N)}$ on a given permutation~$s\in S(m)$ should be given by
 \begin{equation}\label{eq:wglNsigma}
	w_{\fgl(N)}(s)=\sum_{i_1,\cdots,i_m=1}^N E_{i_1i_{s(1)}}E_{i_2i_{s(2)}}\cdots E_{i_mi_{s(m)}}\in U\fgl(N).
\end{equation}

In order to formulate defining relations for~$w_\fgl$ it would be convenient to represent permutations by graphs.

\begin{definition}[digraph of the permutation] Let us represent permutations by directed graphs (digraphs). The digraph $\sigma=\sigma(s)$ of a permutation $s\in S(m)$ consists of~$M$ numbered vertices $1,2,\dots,m$, and~$m$ edges $i\to s(i)$ connecting vertices $i$ and $s(i)$ and directed from $i$ to $s(i)$ for all $i=1,\dots,m$. 
\end{definition}

Instead of numbering vertices it is convenient to place them on an additional oriented line in the order compatible with the orientation of the line, for example,
$$
s=(35214)\quad\longleftrightarrow\quad
\sigma(s)=
\begin{tikzpicture}
\draw (0,0) node[v] (V1) {}  (0.6,0) node[v] (V2) {} (1.2,0) node[v] (V3) {} (1.8,0) node[v] (V4) {} (2.4,0) node[v] (V5) {};
 \draw[->] (-.4,0) -- (3.0,0);
 \draw[->,blue] (V1) .. controls +(.4,.2) and +(-.3,.3) .. (V3);
 \draw[->,blue] (V2) .. controls +(.1,-.6) and +(-.3,-.6) .. (V5);
 \draw[->,blue] (V3) .. controls +(-.2,-.2) and +(.3,-.2) .. (V2);
 \draw[->,blue] (V4) .. controls +(-.4,.6) and +(.4,.6) .. (V1);
 \draw[->,blue] (V5) .. controls +(-.2,.3) and +(.2,.3) .. (V4);
 \end{tikzpicture}
$$

For a given chord diagram the corresponding digraph is obtained by replacing every chord by an oriented two-cycle (with the oriented line contained the vertices represented by the Wilson loop cut at same point).

Another example is the digraph associated with the Casimir element $C_m$ which corresponds to the standard cyclic permutation $1\mapsto2\mapsto\cdots\mapsto m\mapsto1\in S(m)$ and which we denote also by $C_m$. It is a length~$m$ oriented cycle whose orientation is compatible with the cyclic ordering of vertices:
$$
C_m=~
\begin{tikzpicture}
\draw (0,0) node[v] (V1) {}  (0.6,0) node[v] (V2) {} (1.2,0) node[v] (V3) {} (2.4,0) node[v] (V4) {} (3,0) node[v] (V5) {}
(1.8,0) node {$\cdots$};
\draw[->] (-0.4,0) -- (3.6,0);
 \draw[->,blue] (V1) .. controls +(.3,.2) and +(-.3,.2) ..  (V2);
 \draw[->,blue] (V2) .. controls +(.3,.2) and +(-.3,.2) .. (V3);
 \draw[->,blue] (V4) .. controls +(.3,.2) and +(-.3,.2) ..  (V5);
 \draw[->,blue] (V5) .. controls +(-.5,-.5) and +(.5,-.5) .. (V1);
 \end{tikzpicture}
$$

We are ready to formulate the defining relations for the $w_\fgl$ polynomial invariant.

\begin{definition}
The $w_\fgl$ invariant is a function on the set of all permutations (of any finite number of ordered elements) taking values in the ring $\C[C_0,C_1,\dots]$ of polynomials in an infinite number of independent generators and satisfying the following properties (axioms):
\begin{itemize}
		\item $w_{\fgl}$ is multiplicative with respect to concatenation of permutations (the concatenation of permutations corresponds to the disjoint union of their permutation graphs with the ordering on the set of vertices obtained by the linear order on the set of vertices of the first graph followed by the linear order on the set of vertices of the second graph). As a corollary, for the empty graph (with no vertices) the value of $w_{\fgl}$ is equal to $1$;
	\item for the standard cyclic permutation {\rm(}with the cyclic order on the set of permuted elements  compatible with the permutation{\rm)},
 the value of $w_{\fgl}$ is the standard generator,
		$w_\fgl(1\mapsto2\mapsto\cdots\mapsto m\mapsto1)=C_m$;

\item {\rm(}\textbf{Recurrence Rule}{\rm)} For the graph of an arbitrary permutation $s$,
and for any pair of its vertices labelled by consecutive integers $r,r+1$, we have for the values of the $w_\fgl$ weight system
\begin{equation}\label{eq:rec-gl}
w_\fgl\left(~
\begin{tikzpicture}[scale=0.8]
\draw (0.6,0) node[v] (V1) {} (1.2,0) node[v] (V2) {};
\draw  (0.5,-.25) node {\scriptsize$r$}  
 (1.3,-.25) node {\scriptsize$r{+}1$};
\draw [->] (0,0) -- (1.8,0);
\draw[->,blue] (0,0.7) .. controls (0.6,0.5) .. (V2);
\draw[->,blue] (V2) .. controls (0.6,0.9) .. (0,1.4);
\draw[->,blue] (1.8,1.4) .. controls (1.2,0.9) .. (V1);
\draw[->,blue] (V1) .. controls (1.2,0.5) .. (1.8,0.7);
\end{tikzpicture}
 ~\right)=w_\fgl\left(~
\begin{tikzpicture}[scale=0.8]
\draw  (0.7,0) node[v] (V1) {} (1.1,0) node[v] (V2) {};
\draw [->] (0,0) -- (1.8,0);
\draw[->,blue] (0,0.7) .. controls (0.4,0.4) .. (V1);
\draw[->,blue] (V1) .. controls (0.4,0.9) .. (0,1.4);
\draw[->,blue] (1.8,1.4) .. controls (1.4,0.9) .. (V2);
\draw[->,blue] (V2) .. controls (1.4,0.4) .. (1.8,0.7);
\end{tikzpicture}
 ~\right)+w_\fgl\left(~
\begin{tikzpicture}[scale=0.8]
\draw (0.9,0) node[v] (V) {};
\draw [->] (0,0) -- (1.8,0);
\draw[->,blue] (0,0.7) .. controls +(0.5,-0.5) and +(-0.5,-0.5) .. (1.8,0.7);
\draw[->,blue] (1.8,1.4) .. controls +(-0.6,-0.7) .. (V);
\draw[->,blue] (V) .. controls +(-0.3,0.7) .. (0,1.4);
\end{tikzpicture}
 ~\right)-w_\fgl\left(~
\begin{tikzpicture}[scale=0.8]
\draw (0.9,0) node[v] (V) {};
\draw [->] (0,0) -- (1.8,0);
\draw[->,blue] (0,0.7) .. controls (0.4,0.5) .. (V);
\draw[->,blue] (V) .. controls (1.4,0.5) .. (1.8,0.7);
\draw[->,blue] (1.8,1.4) .. controls (0.9,0.2) .. (0,1.4);
\end{tikzpicture}
 ~\right)
\end{equation}
The first digraph on the right differs from the one on the left only by swapping the order of the two neighbouring vertices. For the last two digraphs, we switch the positions of the heads of two edges and remove one of the two vertices joining the corresponding edges. These two graphs have one smaller number of vertices so we renumber them by preserving the relative order implied by their position on the horizontal line.

A closed cycle with no vertices appearing in the graphs on the right hand side in the case $s(r+1)=r$ is interpreted by convention as a factor~$C_0$. More explicitly, for this exceptional case we have
\begin{equation}\label{eq:rec-gl2}
w_\fgl\left(~
\begin{tikzpicture}[scale=0.8]
\draw (0.6,0) node[v] (V1) {} (1.2,0) node[v] (V2) {};
\draw [->] (0,0) -- (1.8,0);
\draw[->,blue] (0,1) .. controls (0.7,0.6) .. (V2);
\draw[->,blue] (V1) .. controls (1.1,0.6) .. (1.8,1);
\draw[->,blue] (V2) .. controls (0.9,0.2) .. (V1);
\end{tikzpicture}
 ~\right)=
 w_\fgl\left(~
\begin{tikzpicture}[scale=0.8]
\draw  (0.6,0) node[v] (V1) {} (1.2,0) node[v] (V2) {};
\draw [->] (0,0) -- (1.8,0);
\draw[->,blue] (0,1) .. controls (0.4,0.6) .. (V1);
\draw[->,blue] (V2) .. controls (1.4,0.6) .. (1.8,1);
\draw[->,blue] (V1) .. controls (0.9,0.3) .. (V2);
\end{tikzpicture}
 ~\right)+
C_0\,w_\fgl\left(~
\begin{tikzpicture}[scale=0.8]
\draw (0.9,0) node[v] (V) {};
\draw [->] (0,0) -- (1.8,0);
\draw[->,blue] (V) .. controls (1.2,0.5) .. (1.8,1);
\draw[->,blue] (0,1) .. controls (0.6,0.5) .. (V);
\end{tikzpicture}
 ~\right)-
  C_1\,w_\fgl\left(~
\begin{tikzpicture}[scale=0.8]
\draw [->] (0,0) -- (1.8,0);
\draw[->,blue] (0,1) .. controls (0.9,0) .. (1.8,1);
\end{tikzpicture}
 ~\right)
\end{equation}
\end{itemize}
\end{definition}

These relations are indeed a recursion, that is, they allow one to
replace the computation of $w_{\fgl}$ on a permutation with its computation on simpler permutations. Indeed, by renumbering vertices any digraph can be reduced to the product of independent standard cycles, so that the value of $w_\fgl$ on the permutation is equal to the corresponding monomial in $C_k$'s plus some combination of values of this invariant on permutations with smaller number of elements.

Compatibility of this invariant with the values of $w_{\fgl(N)}$ defined by~\eqref{eq:wglNsigma} follows from the commutation relations in~$\fgl(N)$, see details in~\cite{Y1}. The correctness of the definition of the $w_\fgl$ invariant (i.e. the fact that the result is independent of the order in which the recursion relations are applied) follows from the fact that for a given permutation~$s$ the polynomial $w_\fgl(s)$ can be recovered from the values of~$w_{\fgl(N)}(s)$ for sufficiently large~$N$.

Note that the fact that the element~\eqref{eq:wglNsigma} belongs to the center of~$U\fgl(N)$ follows from the recursion: we express this element as a polynomial in the Casimirs. Another corollary of the recursion is the cyclic invariance of~$w_\fgl$: conjugation of a permutation by the standard cyclic permutation preserves this invariant. In other terms, the value of the~$w_\fgl$ weight system on a given permutation depends on the cyclic order of the permuted elements only. We provide below ~\ref{thm:cyclic} an independent proof of this fact.

\subsection{Results of the paper}
The paper is organized as follows. In Sec.~2, we describe the universal $\fso$ weight system on arbitrary permutations as an extension of the weight systems $w_{\fso(N)},w_{\fsp(2M)},w_{\fosp(N|2M)}$ and a recursion to computing its values on permutations. We give a proof of cyclic invariance of~$w_\fgl$ and ~$w_\fso$ in the end.
In Sec.~3, we prove that the three weight systems $w_{\fso(N)},w_{\fsp(2M)},w_{\fosp(N|2M)}$ satisfy the very same recursion of the universal $\fso$ weight system.
In Sec.~4, using the recursion relations for the $w_\fso$, we calculate the value of $w_\fso$ on small order chord diagrams. We show that the universal weight system $w_\fso$, while being much weaker than the $w_{\fgl}$ weight system, is however independent and cannot be induced from $w_\fgl$. 
In the Appendix, we recall the Perelomov-Popov formula for Casimirs.

The authors are grateful to S.~Lando for his constant attention to their work. This work was supported by the International Laboratory of Cluster Geometry NRU HSE, RF Government grant, ag. № 075-15-2021-608 dated 08.06.2021.

\section{Construction of the universal $\fso$ weight system}

\subsection{Definition of $w_\fso$}

We define in this section a multiplicative weight system $w_\fso$ taking values in the ring of polynomials in the generators $C_0,C_2,C_4,C_6,\dots$ labelled by even nonnegative integers. Similarly to the case of the $w_\fgl$ weight system, we extend the definition of $w_\fso$ to the set of permutations (on any number of permuted elements). This extension is defined by a set of relations that are close to those for the $w_\fgl$ case. The defining relations presented below are motivated by the requirement that for the Lie algebra $\fso(N)$ (with $N$ of any parity) the specialization of $w_\fso$ taking $C_k$ to the corresponding  Casimir element is given by
 \begin{equation}\label{eq:wsonsigma}
	w_{\fso(N)}(s)=\sum_{i_1,\cdots,i_m=1}^N X_{i_1i_{s(1)}}X_{i_2i_{s(2)}}\cdots X_{i_mi_{s(m)}}\in U\fso(N),
\end{equation}
where $X_{ij}$ are the standard generators of $\fso(N)$, see Appendix.
We show in the next section that the weight systems associated with the Lie algebras $\fsp(2M)$ as well as the Lie superalgebras $\fosp(N|2M)$ are also specializations of the universal weight system $w_\fso$.
 
Before formulating defining relations for $w_\fso$, we introduce some notation used in these relations. 

The invariant $w_\fso$ constructed below possesses an additional symmetry that does not hold for the $\fgl$ case:
\begin{quote}
\emph{assume that the permutation $s'$ is obtained from $s$ by the inversion of one of its independent cycles. In this case the values $w_\fso(s)$ and $w_\fso(s')$ differ by the sign factor $(-1)^r$ where $r$ is the length of the cycle.}
    \end{quote}

This symmetry leads to the following convention. Along with the digraphs of permutations we consider more general graphs which we call extended permutation graphs.

\begin{definition} An \emph{extended permutation graph} is a graph with the following properties:
\begin{itemize}
\item the set of vertices of the graph is linearly ordered, which is depicted by placing them on an additional oriented line in the order compatible with the orientation of that line;
\item each vertex has valency~$2$, in particular, the number of edges is equal to the number of vertices;
\item for each half-edge it is specified whether it is a \emph{head} (marked with an arrow) or a \emph{tail}. For two half-edges adjacent to every vertex one of them is a tail and the other is a head. However, we allow for the edges of the graph to have two heads, or two tails, or a head and a tail.
\end{itemize}
\end{definition}

We extend $w_\fso$ to graphs of this kind by the following additional relation: a change of the tail and the head for the half-edges adjacent to any vertex results in the multiplication of the invariant by $-1$:
$$
w_\fso\left(~
\begin{tikzpicture}[scale=0.8]
\draw (0.9,0) node[v] (V) {};
\draw [->] (0,0) -- (1.8,0);
\draw[->,blue] (1.8,1) .. controls +(-0.6,-0.5) .. (V);
\draw[blue] (V) .. controls +(-0.3,0.5) .. (0,1);
\end{tikzpicture}
~\right)=-w_\fso\left(~
\begin{tikzpicture}[scale=0.8]
\draw (0.9,0) node[v] (V) {};
\draw [->] (0,0) -- (1.8,0);
\draw[blue] (1.8,1) .. controls +(-0.6,-0.5) .. (V);
\draw[->,blue] (0,1) .. controls (0.6,0.5) .. (V);
\end{tikzpicture}
~\right).
$$
By applying the transformation of this relation several times every extended permutation graph can be reduced to a usual permutation graph (characterized by an additional property that every edge has one head and one tail). This permutation graph is not unique but the symmetry of $w_\fso$ formulated above implies that this ambiguity does not affect the extension of $w_\fso$ to the specified set of graphs.

\begin{definition}\label{def:so}
The universal polynomial invariant $w_\fso$ is the function on the set of permutations of any number of elements (or, equivalently, on the set permutation graphs) taking values in the ring of polynomials in the generators $C_0,C_2,C_4,\dots$ and defined by the following set of relations (axioms)
\begin{itemize}
\item $w_{\fso}$ is multiplicative with respect to concatenation of permutation graphs. As a corollary, for the empty graph (with no vertices) the value of $w_{\fso}$ is equal to $1$;
\item A change of orientation of any cycle of length~$r$ in the graph results in multiplication of the value of the invariant~$w_\fso$ by~$(-1)^r$.
\item for a cyclic permutation of even number of elements {\rm(}with the cyclic order on the set of permuted elements  compatible with the permutation{\rm)} $1\mapsto2\mapsto\cdots\mapsto m\mapsto1$,
 the value of $w_{\fso}$ is the standard generator,
$$w_\fso\left(
  \begin{tikzpicture}
\draw (0,0) node[v] (V1) {}  node[above] {$\scriptsize 1$} (0.6,0) node[v] (V2) {}  node[above] {$\scriptsize 2$} (1.2,0) node[v] (V3) {} (2.4,0) node[v] (V4) {} (3,0) node[v] (V5) {}  node[above] {$\scriptsize m$}
(1.8,0) node {$\cdots$};
\draw[->] (-0.4,0) -- (3.6,0);
 \draw[->,blue] (V1) .. controls +(.3,.2) and +(-.3,.2) ..  (V2);
 \draw[->,blue] (V2) .. controls +(.3,.2) and +(-.3,.2) .. (V3);
 \draw[->,blue] (V4) .. controls +(.3,.2) and +(-.3,.2) ..  (V5);
 \draw[->,blue] (V5) .. controls +(-.5,-.5) and +(.5,-.5) .. (V1);
 \end{tikzpicture}
 \right)=C_m,\qquad m\text{ is even}.
 $$
\item {\rm(}\textbf{Recurrence Rule}{\rm)} For the graph of an arbitrary permutation $s$,
and for any pair of its vertices labelled by consecutive integers $r,r+1$, we have for the values of the $w_\fso$ weight system
\begin{multline}\label{eq:rec-so1}
w_\fso\left(~
\begin{tikzpicture}[scale=0.8]
\draw (0.6,0) node[v] (V1) {} (1.2,0) node[v] (V2) {};
\draw  (0.5,-.25) node {\scriptsize$r$}  
 (1.3,-.25) node {\scriptsize$r{+}1$};
\draw [->] (0,0) -- (1.8,0);
\draw[->,blue] (0,0.7) .. controls (0.6,0.5) .. (V2);
\draw[->,blue] (V2) .. controls (0.6,0.9) .. (0,1.4);
\draw[->,blue] (1.8,1.4) .. controls (1.2,0.9) .. (V1);
\draw[->,blue] (V1) .. controls (1.2,0.5) .. (1.8,0.7);
\end{tikzpicture}
 ~\right)=w_\fso\left(~
\begin{tikzpicture}[scale=0.8]
\draw  (0.7,0) node[v] (V1) {} (1.1,0) node[v] (V2) {};
\draw [->] (0,0) -- (1.8,0);
\draw[->,blue] (0,0.7) .. controls (0.4,0.4) .. (V1);
\draw[->,blue] (V1) .. controls (0.4,0.9) .. (0,1.4);
\draw[->,blue] (1.8,1.4) .. controls (1.4,0.9) .. (V2);
\draw[->,blue] (V2) .. controls (1.4,0.4) .. (1.8,0.7);
\end{tikzpicture}
 ~\right)+w_\fso\left(~
\begin{tikzpicture}[scale=0.8]
\draw (0.9,0) node[v] (V) {};
\draw [->] (0,0) -- (1.8,0);
\draw[->,blue] (0,0.7) .. controls +(0.5,-0.5) and +(-0.5,-0.5) .. (1.8,0.7);
\draw[->,blue] (1.8,1.4) .. controls +(-0.6,-0.7) .. (V);
\draw[->,blue] (V) .. controls +(-0.3,0.7) .. (0,1.4);
\end{tikzpicture}
 ~\right)-w_\fso\left(~
\begin{tikzpicture}[scale=0.8]
\draw (0.9,0) node[v] (V) {};
\draw [->] (0,0) -- (1.8,0);
\draw[->,blue] (0,0.7) .. controls (0.4,0.5) .. (V);
\draw[->,blue] (V) .. controls (1.4,0.5) .. (1.8,0.7);
\draw[->,blue] (1.8,1.4) .. controls (0.9,0.2) .. (0,1.4);
\end{tikzpicture}
 ~\right)
 \\
+w_\fso\left(~
\begin{tikzpicture}[scale=0.8]
\draw (0.9,0) node[v] (V) {};
\draw [->] (0,0) -- (1.8,0);
\draw[blue] (0,0.7) .. controls (0.6,0.5) and +(-0.5,-0.7) .. (1.8,1.4);
\draw[<->,blue] (V) .. controls (0.6,0.9) .. (0,1.4);
\draw[->,blue] (V) .. controls (1.2,0.5) .. (1.8,0.7);
\end{tikzpicture}
 ~\right)-w_\fso\left(~
\begin{tikzpicture}[scale=0.8]
\draw (0.9,0) node[v] (V) {};
\draw [->] (0,0) -- (1.8,0);
\draw[<->,blue] (1.8,0.7) .. controls +(-0.6,-0.4) and (0.6,0.9) .. (0,1.4);
\draw[->,blue] (1.8,1.4) .. controls (1.2,0.7) .. (V);
\draw[blue] (V) .. controls (0.6,0.5) .. (0,0.7);
\end{tikzpicture}
 ~\right)\end{multline}
\end{itemize}
The first three graphs on the right are the same as in the defining relation for the $w_\fgl$ weight system. The last two graphs are extended permutation graphs rather than just permutation graphs. Their expression in terms of permutation graphs is determined by the global structure of the original graph and depends on whether the vertices~$r$ and~$r+1$ belong to the same cycle or to two different ones. Assume that these two vertices belong to different cycles in the original graph and the cycle passing through the vertex~$r$ has $a$ vertices different from~$r$, so that the total length of this cycle is $a+1$. Then we have
\begin{equation}\label{eq:rec-so2}
\begin{tikzpicture}[scale=0.8,baseline=0]
\draw (0.9,0) node[v] (V) {};
\draw [->] (0,0) -- (1.8,0);
\draw[blue] (0,0.7) .. controls +(0.6,-0.2) and +(-0.5,-0.7) .. (1.8,1.4);
\draw[<->,blue] (V) .. controls (0.6,0.9) .. (0,1.4);
\draw[->,blue] (V) .. controls (1.2,0.5) .. (1.8,0.7);
\draw[->,blue,dashed] (-0.9,1.3) node {$\scriptstyle a$}  
(0,1.4) .. controls +(-0.6,0.5)  and +(-1.2,0.4) .. (0,0.7);
\end{tikzpicture}
=(-1)^a
\begin{tikzpicture}[scale=0.8,baseline=0]
\draw (0.9,0) node[v] (V) {};
\draw [->] (0,0) -- (1.8,0);
\draw[->,blue] (1.8,1.4) .. controls +(-0.5,-0.7) and +(0.6,-0.2) .. (0,0.7);
\draw[->,blue] (0,1.4) .. controls (0.6,0.9) .. (V);
\draw[->,blue] (V) .. controls (1.2,0.5) .. (1.8,0.7);
\draw[->,blue,dashed] (-0.9,1.3) node {$\scriptstyle a$}  
(0,0.7) .. controls +(-1.2,0.4)  and +(-0.6,0.5) .. (0,1.4);
\end{tikzpicture}
\;,\qquad 
\begin{tikzpicture}[scale=0.8,baseline=0]
\draw (0.9,0) node[v] (V) {};
\draw [->] (0,0) -- (1.8,0);
\draw[<->,blue] (1.8,0.7) .. controls +(-0.6,-0.4) and (0.6,0.9) .. (0,1.4);
\draw[->,blue] (1.8,1.4) .. controls (1.2,0.7) .. (V);
\draw[blue] (V) .. controls (0.6,0.5) .. (0,0.7);
\draw[->,blue,dashed] (-0.9,1.3) node {$\scriptstyle a$}  
(0,1.4) .. controls +(-0.6,0.5)  and +(-1.2,0.4) .. (0,0.7);
\end{tikzpicture}
=(-1)^a
\begin{tikzpicture}[scale=0.8,baseline=0]
\draw (0.9,0) node[v] (V) {};
\draw [->] (0,0) -- (1.8,0);
\draw[<-,blue] (1.8,0.7) .. controls +(-0.6,-0.4) and (0.6,0.9) .. (0,1.4);
\draw[->,blue] (1.8,1.4) .. controls (1.2,0.7) .. (V);
\draw[->,blue] (V) .. controls (0.6,0.5) .. (0,0.7);
\draw[<-,blue,dashed] (-0.9,1.3) node {$\scriptstyle a$}  
(0,1.4) .. controls +(-0.6,0.5)  and +(-1.2,0.4) .. (0,0.7);
\end{tikzpicture}
\;.
\end{equation}
In the case when the vertices $r$ and $r+1$ belong to one cycle, consider the part of this cycle starting at the vertex~$r$ and ending at the vertex~$r+1$, and denote by $a$ the number of vertices on this part excluding the vertices $r$ and $r+1$. Then we have
\begin{equation}\label{eq:rec-so3}
\hskip-1.5em\begin{tikzpicture}[scale=0.8,baseline=0]
\draw (0.9,0) node[v] (V) {};
\draw [->] (0,0) -- (1.8,0);
\draw[blue] (0,0.7) .. controls +(0.6,-0.2) and +(-0.5,-0.7) .. (1.8,1.4);
\draw[<->,blue] (V) .. controls (0.6,0.9) .. (0,1.4);
\draw[->,blue] (V) .. controls (1.2,0.5) .. (1.8,0.7);
\draw[->,blue,dashed] (0.9,2.1) node {$\scriptstyle a$}  
(0,1.4) .. controls +(-2*0.6,2*0.5)  and +(2*0.5,2*0.7) .. (1.8,1.4);
\end{tikzpicture}\hskip-1.5em
=(-1)^a
\hskip-1.5em\begin{tikzpicture}[scale=0.8,baseline=0]
\draw (0.9,0) node[v] (V) {};
\draw [->] (0,0) -- (1.8,0);
\draw[->,blue] (0,0.7) .. controls +(0.6,-0.2) and +(-0.5,-0.7) .. (1.8,1.4);
\draw[<-,blue] (V) .. controls (0.6,0.9) .. (0,1.4);
\draw[->,blue] (V) .. controls (1.2,0.5) .. (1.8,0.7);
\draw[<-,blue,dashed] (0.9,2.1) node {$\scriptstyle a$}  
(0,1.4) .. controls +(-2*0.6,2*0.5)  and +(2*0.5,2*0.7) .. (1.8,1.4);
\end{tikzpicture}\hskip-1.5em
\;,\qquad 
\hskip-1.5em\begin{tikzpicture}[scale=0.8,baseline=0]
\draw (0.9,0) node[v] (V) {};
\draw [->] (0,0) -- (1.8,0);
\draw[<->,blue] (1.8,0.7) .. controls +(-0.6,-0.4) and (0.6,0.9) .. (0,1.4);
\draw[->,blue] (1.8,1.4) .. controls (1.2,0.7) .. (V);
\draw[blue] (V) .. controls (0.6,0.5) .. (0,0.7);
\draw[->,blue,dashed] (0.9,2.1) node {$\scriptstyle a$}  
(0,1.4) .. controls +(-2*0.6,2*0.5)  and +(2*0.5,2*0.7) .. (1.8,1.4);
\end{tikzpicture}\hskip-1.5em
=(-1)^{a+1}
\hskip-1.5em\begin{tikzpicture}[scale=0.8,baseline=0]
\draw (0.9,0) node[v] (V) {};
\draw [->] (0,0) -- (1.8,0);
\draw[<-,blue] (1.8,0.7) .. controls +(-0.6,-0.4) and (0.6,0.9) .. (0,1.4);
\draw[<-,blue] (1.8,1.4) .. controls (1.2,0.7) .. (V);
\draw[<-,blue] (V) .. controls (0.6,0.5) .. (0,0.7);
\draw[<-,blue,dashed] (0.9,2.1) node {$\scriptstyle a$}  
(0,1.4) .. controls +(-2*0.6,2*0.5)  and +(2*0.5,2*0.7) .. (1.8,1.4);
\end{tikzpicture}\hskip-1.5em
\end{equation}
Finally, in the exceptional case when $s(r+1)=r$ we have
\begin{equation}\label{eq:rec-so4}
w_\fso\left(~
\begin{tikzpicture}[scale=0.8]
\draw (0.6,0) node[v] (V1) {} (1.2,0) node[v] (V2) {};
\draw [->] (0,0) -- (1.8,0);
\draw[->,blue] (0,1) .. controls (0.7,0.6) .. (V2);
\draw[->,blue] (V1) .. controls (1.1,0.6) .. (1.8,1);
\draw[->,blue] (V2) .. controls (0.9,0.2) .. (V1);
\end{tikzpicture}
 ~\right)=
 w_\fso\left(~
\begin{tikzpicture}[scale=0.8]
\draw  (0.6,0) node[v] (V1) {} (1.2,0) node[v] (V2) {};
\draw [->] (0,0) -- (1.8,0);
\draw[->,blue] (0,1) .. controls (0.4,0.6) .. (V1);
\draw[->,blue] (V2) .. controls (1.4,0.6) .. (1.8,1);
\draw[->,blue] (V1) .. controls (0.9,0.3) .. (V2);
\end{tikzpicture}
 ~\right)+
(2-C_0)\,w_\fso\left(~
\begin{tikzpicture}[scale=0.8]
\draw (0.9,0) node[v] (V) {};
\draw [->] (0,0) -- (1.8,0);
\draw[->,blue] (V) .. controls (1.2,0.5) .. (1.8,1);
\draw[->,blue] (0,1) .. controls (0.6,0.5) .. (V);
\end{tikzpicture}
 ~\right).
\end{equation}
\end{definition}

\begin{theorem}\label{th:so}
1. The defining relations determine the invariant $w_\fso$ uniquely.

2. For chord diagrams (corresponding to involutions without fixed points) the invariant~$w_\fso$ is a weight system, i.e. it satisfies the $4$-term relation.
\end{theorem}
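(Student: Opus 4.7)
The plan is to reduce both statements to a single \emph{specialization principle}, namely the assertion---to be established in Section~3---that each of $w_{\fso(N)}$, $w_{\fsp(2M)}$ and $w_{\fosp(N|2M)}$ satisfies exactly the defining relations of $w_\fso$ after the substitution of every $C_{2k}$ by the corresponding Casimir element and $C_0$ by the integer $N$, $-2M$, or $N-2M$ respectively. Granting this, Part~1 becomes the assertion that the defining relations reduce the value on an arbitrary permutation to a polynomial in the generators in a consistent way, and Part~2 reduces to a single linear identity.

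For Part~1, I would proceed by strong induction on the number $m$ of vertices. For $m=0$, multiplicativity gives the value $1$. For $m>0$, the recurrence~\eqref{eq:rec-so1}, together with its variants~\eqref{eq:rec-so2}--\eqref{eq:rec-so4} handling the two sign-sensitive cases (the same or distinct cycles) and the exceptional case $s(r+1)=r$, expresses the value on one graph as the value on a graph with the local ``crossing'' at $r,r+1$ undone (still $m$ vertices) plus a combination of graphs with $m-1$ vertices. Iterating such swaps one reorders the labels until every cycle of the permutation occupies a contiguous block on the line; multiplicativity then factorizes the value into a product over cycles. Even cycles are evaluated by the cycle-generator axiom as $C_m$, and odd cycles are forced to vanish by combining the orientation-reversal axiom with cyclic invariance (to be proved in the style of Theorem~\ref{thm:cyclic} for $w_\fgl$). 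Thus any function satisfying the axioms can be computed in finitely many steps.

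The delicate point---and the main obstacle of Part~1---is verifying that the result of this algorithm is independent of the order in which the recurrence is applied. Here the specialization principle enters decisively: if two reduction sequences produced distinct polynomials $P,Q \in \C[C_0,C_2,C_4,\ldots]$, then $P-Q$ would vanish under every substitution $C_{2k}\mapsto C_{\fso(N),2k}$, $C_0\mapsto N$, because both specializations must agree with the unambiguously defined element of $U\fso(N)$ given by~\eqref{eq:wsonsigma}. The Perelomov--Popov formula for the $\fso$ series recalled in the Appendix shows that for $N$ sufficiently large the $C_{\fso(N),2k}$ are algebraically independent, and as $N$ varies these specializations separate polynomials in $\C[C_0,C_2,\ldots]$, forcing $P=Q$. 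An additional layer of care is needed to track the signs coming from the extended-graph relations~\eqref{eq:rec-so2} and~\eqref{eq:rec-so3}, which are absent in the $\fgl$ case.

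For Part~2, I would apply the specialization principle directly. The $4$-term relation among the four chord diagrams is a linear identity in the values of $w_\fso$. After specialization it becomes the corresponding $4$-term identity for $w_{\fso(N)}$, which holds because $w_{\fso(N)}$ is the classical $\fso$-weight system associated with the ${\rm ad}$-invariant Killing form---the $4$-term relation is then a standard consequence of the Jacobi identity in the Kontsevich--Bar-Natan construction. Since the identity is a polynomial identity satisfied under infinitely many independent specializations that separate polynomials, it holds in $\C[C_0,C_2,C_4,\ldots]$. The substantive content is therefore packaged into Section~3, where the compatibility of the classical $w_{\fso(N)}$, $w_{\fsp(2M)}$, $w_{\fosp(N|2M)}$ with the full recurrence must be proved; the present theorem is a short consequence.
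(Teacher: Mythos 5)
Your overall strategy --- reduce everything to the specialization principle of Section~3, prove termination of the recursion by induction on the number of vertices, and deduce well-definedness and the $4$-term relation from the algebraic independence of the Casimirs $C_{\fso(N),2k}$ for large $N$ together with the polynomial dependence of the coefficients on $N$ --- is the paper's argument. However, there is one genuine error in your reduction step for Part~1: you assert that \emph{odd cycles are forced to vanish} by combining the orientation-reversal axiom with cyclic invariance. This is false, and the paper explicitly computes counterexamples: $C_3=\frac{C_0C_2}{2}-C_2$ and $C_5$ is a longer nonzero polynomial (Remark~\ref{rem:Casimirs}). The flaw is that the inverse of a standard cycle of length $r$ is conjugate to it by the order-reversing permutation $i\mapsto r+1-i$, not by a power of the cyclic permutation, so cyclic invariance does not give $w_\fso(\text{cycle})=w_\fso(\text{cycle}^{-1})$ on the nose. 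Realizing this conjugation by reordering vertices costs correction terms supported on permutations with fewer elements, by the recurrence. Hence for odd $r$ one only obtains $2\,w_\fso(\text{cycle})=(\text{lower-order terms})$, which \emph{determines} the value on the odd cycle as a polynomial in $C_0$ and the even generators but does not make it zero (except for $r=1$, where the cycle literally equals its inverse and indeed $C_1=0$). As written, your normal-form algorithm would output the wrong polynomial for any permutation containing an odd cycle of length $\ge 3$.

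The remainder of the argument is sound and coincides with the paper's: the uniqueness via specialization to $w_{\fso(N)}$ for sufficiently large $N$, the separation of polynomials using the algebraic independence of the Casimirs and the polynomial dependence on $N$, and the derivation of the $4$-term relation from the classical Lie-algebra weight system are all as in the paper. With the odd-cycle step corrected to ``an odd cycle is expressible, via the conjugacy of a cycle with its inverse together with the sign axiom, as one half of an explicit lower-order expression,'' your proof becomes the paper's proof.
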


The proof of Theorem~\ref{th:so} goes as follows. Using defining relations any graph can be reduced to a product of independent cycles of both even and odd length. Next we observe that any cycle is conjugate to its inverse in the permutation group. It follows that the difference between a cycle and its inverse can be expressed modulo the relations in terms of permutations on smaller number of vertices. If the length of the cycle is even this relation says nothing about the value of $w_\fso$ on this cycle. But in the case when the length of the cycle is odd we obtain that this cycle taken twice is expressed in terms of permutations with smaller number of elements. Applying this procedure repeatedly we finally express any permutation (permutation graph) as a polynomial (a linear combination of monomials) in the standard cycles of even length~$2k$ for which the value of $w_\fso$ is denoted by~$C_{2k}$.

Next we need to show that the obtained polynomial expression for a given permutation in terms of generators is unique, that is, independent of the order in which we applied the recurrence relations to obtain this expression.

For that we consider the weight systems associated with the classical Lie algebra $\fso(N)$, and define the extension of the corresponding weight system $w_{\fso(N)}$ to the set of permutations using~\eqref{eq:wsonsigma}. Then, we prove in the next section that this invariant satisfies defining relations of~$w_\fso$ weight system, see Theorem~\ref{th:soNspN}.

We conclude the proof of Theorem~\ref{th:so} by the arguments similar to those for the case of the $w_\fgl$ weight system. Using recurrence relations, we can express the value of $w_{\fso(N)}$ on a given permutation as a polynomial in the Casimir elements corresponding to cyclic permutations of even length. The Casimir elements belong to the center of $U\fso(N)$ which implies that $w_{\fso(N)}(s)$ also belongs to the center.
Moreover, the Casimir elements $C_{\fso(N),2},C_{\fso(N),4},\dots,C_{\fso(N),[N/2]}$ are algebraically independent. It follows that for a fixed permutation and sufficiently large~$N$ the polynomial expression for this permutation in terms of Casimirs is uniquely defined (for the specialization $C_0=N$). The coefficients of this polynomial depend on~$N$ in a polynomial way and these polynomials in~$N$ are uniquely reconstructed from a finite collection of (sufficiently large) values of~$N$.

The multiplicativity of $w_\fso$ holds because it obviously holds for $w_{\fso(N)}$. The $4$-term relation also follows from the fact that it holds in the case of the $\fso(N)$ weight system (as for a weight system associated with a metrized Lie algebra). This completes the proof of Theorem~\ref{th:so}.

\begin{remark}\label{rem:Casimirs}
1. If $N$ is odd, the center of $U\fso(N)$ is generated by Casimirs. 
If $N=2N_1$ is even, then the center of $U\fso(2N_1)$ along with the Casimir elements contains also an extra generator which is not expressed in terms of the Casimirs, see\cite{nwachuku1979}. The arguments of the proof above imply that the $\fso(2M)$ weight system takes actually values in the subring of $ZU\fso(2M)$ generated by Casimirs.

2. As it is mentioned above, for odd~$m$ the element $C_m$ defined as the value of $w_\fso$ on the standard cycle of length~$M$ can be expressed as a polynomial in the generators $C_{m'}$ with even indices~$m'$. For example, applying directly defining relations we find for small odd~$m$,
\begin{align*}
C_1&=0,\\
C_3&=\frac{C_0 C_2}{2}-C_2,\\
C_5&=-\frac{1}{4} C_2 C_0^3+\frac{5}{4} C_2 C_0^2-2 C_2 C_0+\frac{3 C_4 C_0}{2}+C_2-2 C_4-\frac{C_2^2}{2},
\end{align*}
and so on. In fact, algebraic relations permitting one to find expressions for $C_m$ with odd~$m$ in a closed form can be written as follows. Let us combine $C_m$ into the following generating series
$$
F(u)=1-\frac{u-\frac{C_0-1}{2}}{u-\frac{C_0-2}{2}}\;\sum_{m=0}^\infty C_m u^{-m-1}
$$
in the inverse powers of an independent variable~$u$. (The reason why the generating series is written in such fancy way is because it appears in this form in the Perelomov-Popov formula for the Lie algebras $\fso(N)$ and $\fsp(N)$, see~Appendix).
Then the defining relation for the elements $C_m$  with odd~$m$ reads
$$
F(u)\,F(C_0-1-u)=1.
$$
For the proof of this relation see~Appendix.
\end{remark}

\subsection{Proof of cyclic invariance of~$w_\fgl$ and ~$w_\fso$}
\begin{theorem}\label{thm:cyclic}
    The values of the weight systems $w_{\fgl}$ and $w_{\fso}$ depend on the cyclic order on the set of permuted elements only. In other words, a conjugation of a permutation $s\in S(m)$ by the cyclic permutation $1\mapsto 2\mapsto\dots\mapsto m\mapsto 1$ preserves the values of both weight systems.
\end{theorem}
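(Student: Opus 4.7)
The plan is to prove both statements by a parallel induction on the number $m$ of permuted elements, using the recursions of Section~2 directly. For $m\le 1$ cyclic conjugation is trivial, so we assume the result for all permutations on fewer than $m$ elements. Fix $s\in S(m)$ and write $\tilde s=c^{-1}sc$ for its cyclic conjugate under $c=(1\;2\;\cdots\;m)$. Within the inductive step we use a secondary induction on a complexity measure (for instance, the number of edge crossings in the graph) that is decreased by the main recursion.

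First I would dispose of the case when vertex~$1$ is a fixed point of $s$: the digraph then splits as the concatenation of a self-loop at vertex~$1$ with the digraph of $s|_{\{2,\dots,m\}}$, and the digraph of $\tilde s$ splits analogously with a self-loop at the right end. Multiplicativity reduces both sides to the same $(m-1)$-vertex permutation up to relabeling, and the induction hypothesis closes this case. (For $w_\fso$, both sides in fact vanish because $C_1=0$, cf.\ Remark~\ref{rem:Casimirs}.)

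For the generic inductive step, my plan is to apply the recursion \eqref{eq:rec-gl} (resp.\ \eqref{eq:rec-so1}) to $s$ at a suitable pair of consecutive vertices $(r,r+1)$, and simultaneously to $\tilde s$ at the cyclically shifted pair. The leading ``swapped'' term on the right-hand side of each recursion still has $m$ vertices but smaller secondary complexity, and its versions for $s$ and $\tilde s$ are again cyclic conjugates, so they are handled by the secondary induction. The remaining summands are $w$-values on permutations of $m-1$ vertices, hence cyclic invariants by the primary induction hypothesis; a direct case analysis on the local edge configuration shows that the correction terms produced from $s$ match those produced from $\tilde s$ in the sense of cyclic equivalence.

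The principal difficulty lies in this last matching, especially when the chosen pair includes the endpoint~$1$ or~$m$: removing a terminal vertex interacts differently with the cyclic shift than removing an interior one, so several boundary cases have to be verified separately. For $w_\fso$ there is the additional subtlety of the extended-graph terms and the sign factors $(-1)^a$ appearing in \eqref{eq:rec-so2}--\eqref{eq:rec-so3}, where $a$ is an arc length inside a cycle; since $a$ depends only on the cyclic order of the vertices on that cycle, these signs are automatically preserved under the cyclic shift and do not obstruct the induction.
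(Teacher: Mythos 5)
Your proposal follows essentially the same route as the paper's proof: induction on the number of permuted elements, using the recursion to show that the difference $w_\fg(s)-w_\fg(s^{cyc})$ is unchanged by adjacent swaps of interior vertices (the correction terms, living on $m-1$ elements, matching under the cyclic shift by the induction hypothesis), and then reducing to concatenations of standard cycles. The paper sidesteps the boundary cases you flag by only ever swapping at interior positions $1<i<m-1$, and the one step you leave implicit --- the base case of your secondary induction, where the permutation is a product of standard cycles and its conjugate has one cycle wrapping past position $m$ --- is precisely the step the paper itself disposes of in a single phrase about multiplicativity and cyclic invariance of standard cycles, so the two arguments sit at the same level of detail.
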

\begin{proof}
By induction on the number of permuted elements, let $s\in S(m)$ and $s^{cyc}\in S(m)$ be the conjugation of $s$ by the cyclic permutation $1\mapsto 2\mapsto\dots\mapsto m\mapsto 1$. Assume the cyclic invariance of $w_\fgl$ and $w_\fso$ on all permutations with less than $m$ elements. Since the right side of the recursion of $w_\fgl$ and $w_\fso$ is the difference of switching two neighbouring legs, the left side contains only the weight systems on permutations with less elements. By induction, \[
w_\fg(s)-w_\fg(s^{cyc})=w_\fg(s_{i,i+1})-w_\fg(s_{i,i+1}^{cyc}), 1<i<m-1,\fg=\fgl,\fso.
\]
The difference is just the difference after switching two neighbouring legs. You can always reorder the elements as the product of standard cyclic permutations. By the multiplicative commutativity of Casimirs and the natural cyclic invariance of standard cyclic permutations, the difference $w_\fg(s)-w_\fg(s^{cyc})=0$.
\end{proof}

\section{Recursion for the weight systems $w_{\fso(N)}$, $w_{\fsp(2M)}$, and $w_{\fosp(N|2M)}$}

In this section, we define extensions of the weight systems $w_{\fso(N)}$, $w_{\fsp(2M)}$, and $w_{\fosp(N|2M)}$ associated with the Lie algebras and Lie superalgebras to the set of permutations and prove that these extensions do satisfy the relations of Definition~\ref{def:so}. Namely, we prove the following.

\begin{theorem}\label{th:soNspN}
The extension to the set of permutations for the weight systems associated with the Lie algebras $\fso(N)$ and $\fsp(2M)$, and Lie superalgebra $\fosp(N|2M)$ take values in the center of the corresponding universal enveloping algebras and obey the defining relations of the universal $w_\fso$ weight system, with the specialization of $C_k$ to the standard Casimir elements for $k\ge1$, and with $C_0=N$ for the Lie algebra $\fso(N)$, $C_0=-2M$ for the Lie algebra $\fsp(2M)$, and $C_0=N-2M$ for the Lie superalgebra~$\fosp(N|2M)$.
\end{theorem}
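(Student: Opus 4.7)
The plan is to establish the theorem by direct computation using the commutation relations of the standard generators, treating the three cases $\fso(N)$, $\fsp(2M)$, and $\fosp(N|2M)$ uniformly through the graded bracket that covers all three. I would begin by recalling the generators $X_{ij}$ (respectively $F_{ij}$ for the symplectic and orthosymplectic cases) and, most importantly, their basic symmetry property, which in each of the three cases takes the form $X_{ij} = \pm X_{ji}$ with a sign determined by the bilinear form preserved by the Lie (super)algebra. This antisymmetry is the key structural feature that distinguishes the $\fso$-family from $\fgl$, and it is exactly what produces the sign factors $(-1)^a$ in relations~\eqref{eq:rec-so2}--\eqref{eq:rec-so3}: reversing the orientation of a cycle of length $r$ transposes $r$ indices in the corresponding product of generators, yielding an overall $(-1)^r$, which matches the cycle-reversal axiom of $w_\fso$.

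Next, I would verify the main Recurrence Rule~\eqref{eq:rec-so1}. Starting from \eqref{eq:wsonsigma}, the factors labeled $r$ and $r+1$ appear as adjacent terms $X_{i_r i_{s(r)}} X_{i_{r+1} i_{s(r+1)}}$ in the product. Rewriting this as the swapped product plus a commutator (or super-commutator) and then using the explicit commutation relations of $X_{ij}$, one obtains one Casimir-style term (the swap) plus a sum of contractions. Each contraction arises from a Kronecker delta pairing one of the indices $i_r, i_{s(r)}$ with one of $i_{r+1}, i_{s(r+1)}$; after resumming over the free indices, each such contraction produces exactly one of the four graphs appearing on the right-hand side of~\eqref{eq:rec-so1}. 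Two of these contractions give ordinary permutation graphs with one fewer vertex (as in the $\fgl$ case), while the other two involve an index relation forced by the symmetry $X_{ij}=\pm X_{ji}$, and these are precisely the ``extended permutation graphs'' appearing as the last two terms. The rigorous translation from indices to graphs here is mechanical but requires careful bookkeeping.

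The most delicate point — and the one I expect to be the main obstacle — is correctly identifying, in each contraction, which permutation graph is produced, and showing that the resulting global change of orientation of part of a cycle matches the sign rules~\eqref{eq:rec-so2} and~\eqref{eq:rec-so3}. Here one must distinguish two topological cases: whether vertices $r$ and $r+1$ lie in the same cycle of the permutation or in two different cycles. In the ``different cycles'' case, the contraction fuses two cycles into one and, after restoring the canonical orientation of all edges by repeated application of $X_{ij}=\pm X_{ji}$, one picks up $(-1)^a$ where $a$ is the number of vertices in the cycle segment being reversed, matching~\eqref{eq:rec-so2}. In the ``same cycle'' case, the contraction splits the cycle into two; reversing the segment of length $a$ to recover a positively oriented cycle gives the sign $(-1)^a$ in the first identity of~\eqref{eq:rec-so3}, while the second identity acquires an extra $-1$ because reversing the complementary arc crosses one additional edge. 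The exceptional case $s(r+1)=r$, leading to~\eqref{eq:rec-so4}, follows from the same computation, with the two-cycle of length two producing $\sum_{i,j} X_{ij}X_{ji}$, which equals $-\Tr(X^2)$ on the $r$, $r+1$ indices and, after using $X_{ij}=-X_{ji}$, produces the scalar factor $(2-C_0)$ in the orthogonal case (and analogously $2-C_0 = 2+2M$ for $\fsp$ and $2-C_0$ for $\fosp$, with $C_0 = N$, $-2M$, $N-2M$ respectively).

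Finally, centrality and the specialization of $C_0$ follow as by-products. Once the recursion is established, any $w_{\fg}(s)$ can be expressed as a polynomial in the values $w_{\fg}(C_{m})$ assigned to standard even cycles, which are precisely the standard Casimirs in the center $ZU\fg$; in particular $w_{\fg}(s)$ is central. The specialization $C_0 = N$ (resp.\ $-2M$, $N-2M$) comes from evaluating the empty-cycle factor that appears whenever a contraction produces a closed loop with no vertices: this factor equals $\sum_i 1 = N$ for $\fso(N)$, $\sum_i(-1)=-2M$ for $\fsp(2M)$ (after using antisymmetry of the symplectic form), and the supertrace $N-2M$ for $\fosp(N|2M)$. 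The multiplicativity and $4$-term relation parts are inherited from the standard construction of Lie-algebra weight systems, so nothing further needs to be checked on that side. Putting these pieces together completes the proof.
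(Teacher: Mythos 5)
Your overall strategy --- expand the product of the two adjacent factors $X_{i_r i_{s(r)}}X_{i_{r+1}i_{s(r+1)}}$ as the swapped product plus the bracket, use the quadratic relations, match each Kronecker-delta contraction with one of the graphs on the right of~\eqref{eq:rec-so1}, and restore canonical orientations by propagating the linear relation along a cycle segment to produce the signs $(-1)^a$ in~\eqref{eq:rec-so2}--\eqref{eq:rec-so3} --- is exactly the paper's argument, and for $\fso(N)$ your sketch is essentially complete. One small correction there: in the paper's basis the linear relation is not $X_{ij}=\pm X_{ji}$ but $X_{ij}=-X_{\bar j\bar i}$ with $\bar i=N+1-i$, so the ``extra'' deltas pair an index with the bar of another; this changes nothing structurally for $\fso(N)$.

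The genuine gap is in the $\fsp(2M)$ and $\fosp(N|2M)$ cases. There is no relation of the form $X_{ij}=\pm X_{ji}$ for these algebras: the actual relations are $X_{ij}=-\epsilon_i\epsilon_j X_{\bar j\bar i}$ and $X_{ij}=-(-1)^{\eta_i\eta_j}\epsilon_{\bar i}\epsilon_j X_{\bar j\bar i}$, and the quadratic relations carry the same $\epsilon$-factors. When you propagate the bar along a cycle segment to restore orientations, these $\epsilon$-factors do not all cancel; the paper shows that the residual sign flips the contribution of the last two graphs relative to~\eqref{eq:rec-so2}--\eqref{eq:rec-so3}. Consequently the naive extension $\sum X_{i_1i_{s(1)}}\cdots X_{i_mi_{s(m)}}$, which is what your argument implicitly uses, does \emph{not} satisfy the $w_\fso$ recursion for $\fsp(2M)$: one must renormalize by $(-1)^{m+r}$ (where $m$ is the number of permuted elements and $r$ the number of cycles), and this factor compensates the residual sign precisely because the two problematic graphs are distinguished by the parity of $m+r$. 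Likewise for $\fosp(N|2M)$ the extension to permutations is not even well defined until the Koszul signs in the noncommuting product are fixed; the paper does this with the sign function $(-1)^{f_s}$ built from distinguished indices and distinguished pairs, and the verification of~\eqref{eq:rec-so1} must track how $f_s$ changes under each contraction. Your proposal needs to supply these normalizations (and note that on chord diagrams the $\fsp$ factor amounts only to a rescaling of the bilinear form, so the weight-system statements are unaffected); without them, the recursion you claim is false as written for $\fsp(2M)$ and $\fosp(N|2M)$.
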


\begin{remark}
    The Lie algebras $\fso(N)$ and $\fsp(2M)$ are special cases of the Lie superalgebra $\fosp(N|2M)$, namely, $\fso(N)=\fsp(N|0)$ and $\fsp(2M)=\fosp(0|2M)$. By that reason, the statement of Theorem concerning the Lie algebras  $\fso(N)$ and $\fsp(2M)$ follows from the one for the Lie superalgebra $\fosp(N|2M)$. However, we consider the cases of the Lie algebras $\fso(N)$ and $\fsp(2M)$ independently because the arguments admit some simplifications in these cases.
\end{remark}

\subsection{$w_{\fso(N)}$}

The Lie algebra $\fso(N)$ is the Lie subalgebra of $\fgl(N)$ spanned by $X_{ij}=E_{ij}-E_{\bar j\bar i}$, $i,j=1,\dots,N$, where
$$
\bar i=N+1-i.
$$
These generators satisfy linear relations
$$
X_{i,j}+X_{\bar j\bar i}=0
$$
and the multiplication in the universal enveloping algebra $U\fso(N)$ satisfies the quadratic relations
$$
X_{ij}X_{kl}-X_{kl}X_{ij}
=\delta_{kj}X_{il}-\delta_{il}X_{kj}
-\delta_{\bar l j}X_{i\bar k}+\delta_{i\bar k}X_{\bar lj}.
$$
For any permutation $s\in S(l)$ we define
$$w_{\fso(N)}(s)=\sum_{i_1,\dots,i_l=1}^{N}X_{i_1i_{s(1)}}\dots X_{i_li_{s(l)}}\in U\fso(N).$$
The Casimir element~$C_m$ corresponds to the standard cycle $1\mapsto 2\mapsto\dots\mapsto m\mapsto 1$
$$C_m=\sum_{i_1,\dots,i_l=1}^{N}X_{i_1i_{2}}\dots X_{i_mi_{1}}\in U\fso(N).$$

The sum in the definition of~$w_{\fso(N)}(s)$ can be represented graphically as follows. We take the digraph of~$s$ and assign an additional marking to each its vertex in the range from $1$ to $N$. Then, to the edge connecting vertices marked with $i$ and $j$ we associate the generator $X_{ij}$, multiply these generators for all edges in the order corresponding to their ordering in the graph, and sum up over all possible ways to put these additional markings on vertices.

The recursion relations~\eqref{eq:rec-so1}--\eqref{eq:rec-so4} just imitate the commutation relations in the Lie algebra $\fso(N)$.
We have the Quadratic relations in $\fso(N)$:\begin{align*}&X_{i_ki_{s(k)}}X_{i_{k+1}i_{s(k+1)}}-X_{i_{k+1}i_{s(k+1)}}X_{i_ki_{s(k)}}\\
=&\delta_{i_{k+1}i_{s(k)}}X_{i_ki_{s(k+1)}}-\delta_{i_ki_{s(k+1)}}X_{i_{k+1}i_{s(k)}}
+\delta_{i_k\bar i_{k+1}}X_{\bar i_{s(k+1)}i_{s(k)}}-\delta_{\bar i_{s(k+1)} i_{s(k)}}X_{i_k\bar i_{k+1}}.
\end{align*}
The left side as well as the first two terms in the right side naturally correspond to the terms in the commutation relations in the $\fgl(N)$ case  and they are represented by the same diagrams as in~\eqref{eq:rec-gl} and~\eqref{eq:rec-gl2}, see \cite{Y1}.

Now we consider the last two terms in the above commutation relations. Both
$\delta_{i_k\bar i_{k+1}}$ and $X_{i_k\bar i_{k+1}}$ are not terms in the standard  definition of $w_\fso$. Since we have $X_{ij}=-X_{\bar j\bar i}$, we change the next term whose index contains $i_{k+1}$:$X_{i_{s^{-1}(k+1)} i_{k+1}}$ as $-X_{\bar i_{k+1}\bar i_{s^{-1}(k+1)} }$; and change the next term $X_{i_{s^{-2}(k+1)} i_{s^{-1}(k+1)}}$ as $-X_{\bar i_{s^{-1}(k+1)} \bar i_{s^{-2}(k+1)}}$; and so on until we run back to the terms $X_{\bar i_{s(k+1)}i_{s(k)}}$ and $\delta_{\bar i_{s(k+1)} i_{s(k)}}$ in the  $w_\fso(s)$. Since $w_{\fso(N)}$ takes the sum over all indices, we can ignore the bar on the indices we changed. Switching the indices means we change the direction on the edge in the permutation diagram.  The last two terms also correspond to the permutation diagrams~\eqref{eq:rec-so2} and~\eqref{eq:rec-so3} in the last two terms of the recursion relations~\eqref{eq:rec-so1}. 

The last relation~\eqref{eq:rec-so4} can be regarded as a special case of~\eqref{eq:rec-so1} if we treat a loop without vertices in the diagrams on the right hand side as a factor~$C_0$.
This completes the proof of Theorem~\ref{th:soNspN} for the case of the Lie algebra~$\fso(N)$.

\subsection{$w_{\fsp(2M)}$}
The Lie algebra $\fsp(2M)$ is the Lie subalgebra of $\fgl(2M)$ spanned by $X_{ij}=E_{ij}-\epsilon_i\epsilon_jE_{\bar j\bar i}$, $i,j=1,\dots,N$, where
$$
\bar i=2M+1-i,\qquad
\epsilon_i=\begin{cases}
 1,&i=1,\dots,M,\\
 -1,&i=M+1,\dots,2M.
\end{cases}
$$
These generators satisfy linear relations
$$
X_{i,j}+\epsilon_i\epsilon_jX_{\bar j\bar i}=0
$$
and the multiplication in the universal enveloping algebra $U\fsp(2M)$ is subject to the quadratic relations
$$
X_{ij}X_{kl}-X_{kl}X_{ij}
=\delta_{kj}X_{il}-\delta_{il}X_{kj}
-\epsilon_i\epsilon_j(\delta_{\bar l j}X_{i\bar k}-\delta_{i\bar k}X_{\bar lj}).
$$
For any permutation $s\in S(m)$ consisting of $r$ cycles, we define
$$w_{\fsp(2M)}(s)=(-1)^{m+r}\sum_{i_1,\dots,i_m=1}^{N}X_{i_1i_{s(1)}}\dots X_{i_mi_{s(m)}}\in U\fsp(2M).$$
An extra sign appearing in front of the sum is a matter of normalization. We put it in order to obtain the weight system $w_{\fsp(2M)}$ as a specialization of the universal weight system $w_{\fso}$, see the arguments below. The same sign convention is used also in the definition of the Casimir elements,
$$C_m=(-1)^{m+1}\sum_{i_1,\dots,i_m=1}^{N}X_{i_1i_{2}}\dots X_{i_mi_{1}}\in U\fsp(2M).$$

The commutation relations in the Lie algebra~$\fsp(2M)$ involve extra sign factors $\epsilon_i$ so that interpretation of the relation~\eqref{eq:rec-so1} similar to $\fso(N)$ case does not hold. however, it turns out that if we use directly relations~\eqref{eq:rec-so2} and~\eqref{eq:rec-so3} instead of involving permutation graphs only, then all these $\epsilon$-factors cancel out and the commutation relations admit graphical presentations. In particular, if we define the $w_{\fsp(2M)}$ invariant for a permutation by a formula similar to the $\fso$-case, without $(-1)^{m+r}$ factor, it would also admit a recursion in terms of permutation graphs similar to~\eqref{eq:rec-so2}--\eqref{eq:rec-so3}. A careful account of the contribution of the $\epsilon$-factors shows that the obtained relations will differ from~\eqref{eq:rec-so2}--\eqref{eq:rec-so3} just by the change of the signs in the contribution of the last two graphs on the right hand side of~\eqref{eq:rec-so2}. These two graphs are distinguished exactly by the parity of the sum of length of all the cycles increased by one, that is, the parity of $m+r$. It follows that with an additional factor $(-1)^{r+m}$ involved in the definition of $w_{\fsp(2M)}$ the commutation relations for this invariant are exactly the same as for the invariant~$w_{\fso}$, that is,~\eqref{eq:rec-so2}~\eqref{eq:rec-so3} and~\eqref{eq:rec-so4} with $C_0=-2M$.

Note that the insertion of the factor $(-1)^{r+l}$ is not essential for the study of $\fsp(2M)$ weight system on chord diagrams: it corresponds to just a rescaling of the scalar product.


\subsection{$w_{\fosp(N|2M)}$}

Now let us turn to the general cade.
Consider the $\Z_2$-graded coordinate space $\C^{N+2M}=\C^N\oplus\C^{2M}$ with the following convention on the set of indices $\{1,2,\dots,N+2M\}$ labelling the coordinates in these subspaces: we say that the index $i$ is even if $M+1\le i \le M+N$ and $i$ is odd otherwise. Following this convention we set
\begin{align*}
\bar i&=N+2M+1-i,\\
\eta_i&=\begin{cases} 0,& M+1\le i \le M+N,\\
1,& 1\le i \le M\text{ or }N+M+1\le i \le N+2M,\end{cases}\quad
\sigma_i=(-1)^{\eta_i},\\
\epsilon_i&=\begin{cases} 1,& 1\le i\le M+N,\\
-1,& N+M+1\le i \le N+2M.\end{cases}
\end{align*}
The Lie superalgebra $\fosp(N|2M)$ is a subalgebra of $\fgl(N|2M)$ spanned by the generators 
$$
X_{ij}=E_{ij}-(-1)^{\eta_i\eta_j}\epsilon_{\bar i}\epsilon_j E_{\bar j\bar i}, i,j=1,\dots,N+2M.
$$
These generators satisfy the linear relations
$$
X_{ij}+(-1)^{\eta_i\eta_j}\epsilon_{\bar i}\epsilon_j X_{\bar j\bar i}=0.
$$
and the multiplication in the universal enveloping algebra $U\fosp(N|2M)$ is subject to the quadratic relations
\begin{multline*}
X_{ij}X_{kl}-(-1)^{(\eta_i+\eta_j)(\eta_k+\eta_l)}X_{kl}X_{ij}\\
=\delta_{kj}X_{il}-(-1)^{(\eta_i+\eta_j)(\eta_k+\eta_l)}\delta_{il}X_{kj}
-(-1)^{\eta_i\eta_j}\epsilon_{\bar i}\epsilon_j(\delta_{\bar l j}X_{i\bar k}-(-1)^{(\eta_i+\eta_j)(\eta_k+\eta_l)}\delta_{i\bar k}X_{\bar lj}).
\end{multline*}
The Casimir elements are $C_0=N-2M$ and
$$
C_k=\sum_{i_1,\dots,i_k=1}^N(-1)^{\eta_{i_2}+\dots+\eta_{i_k}} X_{i_1i_2}X_{i_2i_3}\dots X_{i_ki_1}
\quad (k\ge1).
$$


Let $s\in S(m)$ be a permutation. We say that an index $i\in\{1,\dots,l\}$ is \emph{distinguished} if $s(i)>1$. We say that a pair of indices $(i,j)$, $i<j$, is distinguished if the pairs of real numbers $(i+1/3,s(i)-1/3)$ and $(j+1/3,s(j)-1/3)$ alternate on the real line. Denote the sets of distinguished indices and pairs of indices by $P_1$ and $P_2$, respectively, and set
$$
f_{s}(\tau_1,\dots,\tau_m)=\sum_{i\in P_1}\tau_i+\sum_{\{i,j\}\in P_2}\tau_i\tau_j.
$$
By construction $f_s$ is a polynomial in $\tau_1,\dots,\tau_m$.

The extension of the $w_{\fosp(N|2M)}$ weight system to the set of permutations is defined by
$$
w_{\fosp(N|2M)}(s)=\sum_{i_1,\dots,i_m=1}^{N+2M}(-1)^{f_s(\eta_{i_1},\dots,\eta_{i_m})}X_{i_1i_{s(1)}}\dots X_{i_mi_{s(m)}}\in U\fosp(N|2M).
$$
The definition of the sign function $(-1)^{f_s}$ implies that the weight systems associated with the Lie algebras $\fso(N)=\fosp(N|0)$ and $\fsp(2M)=\fosp(0,2M)$ are special cases of more general Lie superalgebra $\fosp(N|2M)$.

The verification of the  fact that the invariant $w_{\fosp(N|2M)}$ satisfies the same recursion relations~\eqref{eq:rec-so1}--\eqref{eq:rec-so4} of the universal $w_{\fso}$ weight system with $C_0=N-2M$ is straightforward. Useful lemmas simplifying this verification are formulated in~~\cite{Y2}.

For the Lie superalgebra version, we need to take care of the sign function. The left side and the first two terms in the right side are naturally correspond to the permutation diagrams in  the recursion relations~\eqref{eq:rec-so1}, see \cite{Y2}. 

Now we look at the last two terms on the right. We have the Quadratic relations:
\begin{multline*}
X_{ij}X_{kl}-(-1)^{(\eta_i+\eta_j)(\eta_k+\eta_l)}X_{kl}X_{ij}\\
=\delta_{kj}X_{il}-(-1)^{(\eta_i+\eta_j)(\eta_k+\eta_l)}\delta_{il}X_{kj}
-(-1)^{\eta_k\eta_l}\epsilon_{\bar k}\epsilon_l(\delta_{\bar l j}X_{i\bar k}-(-1)^{(\eta_i+\eta_j)(\eta_k+\eta_l)}\delta_{i\bar k}X_{\bar lj}).
\end{multline*}

Draw the graph as follows (the position of $i,j,k,l$ is just an example, and in the other situations, the calculations are similar):
\begin{align*}
  \begin{tikzpicture}[scale=0.8]
    \draw (0,1.4) node[left] {\tiny $j$};
    \draw (0,.7) node[left] {\tiny $i$};
    \draw (1.8,1.4) node[right] {\tiny $k$};
    \draw (1.8,.7) node[right] {\tiny $l$};
\draw  (0.7,0) node[v] (V1) {} (1.1,0) node[v] (V2) {};
\draw [->] (0,0) -- (1.8,0);
\draw[->,blue] (0,0.7) .. controls (0.4,0.4) .. (V1);
\draw[->,blue] (V1) .. controls (0.4,0.9) .. (0,1.4);
\draw[->,blue] (1.8,1.4) .. controls (1.4,0.9) .. (V2);
\draw[->,blue] (V2) .. controls (1.4,0.4) .. (1.8,0.7);
    \draw (0,-0.7) node { $f_s X_{ij}X_{kl}$};
  \end{tikzpicture}-
\begin{tikzpicture}[scale=0.8]
    \draw (0,1.4) node[left] {\tiny $j$};
    \draw (0,.7) node[left] {\tiny $i$};
    \draw (1.8,1.4) node[right] {\tiny $k$};
    \draw (1.8,.7) node[right] {\tiny $l$};
\draw (0.6,0) node[v] (V1) {} (1.2,0) node[v] (V2) {};
\draw [->] (0,0) -- (1.8,0);
\draw[->,blue] (0,0.7) .. controls (0.6,0.5) .. (V2);
\draw[->,blue] (V2) .. controls (0.6,0.9) .. (0,1.4);
\draw[->,blue] (1.8,1.4) .. controls (1.2,0.9) .. (V1);
\draw[->,blue] (V1) .. controls (1.2,0.5) .. (1.8,0.7);
    \draw (0,-0.7) node { $(f_s+(-1)^{(\eta_i+\eta_j)(\eta_k+\eta_l)} )X_{kl}X_{ij}$};
  \end{tikzpicture}=\\
\begin{tikzpicture}[scale=0.8]
    \draw (0,1.4) node[left] {\tiny $j$};
    \draw (0,.7) node[left] {\tiny $i$};
    \draw (1.8,1.4) node[right] {\tiny $k$};
    \draw (1.8,.7) node[right] {\tiny $l$};
\draw (0.9,0) node[v] (V) {};
\draw [->] (0,0) -- (1.8,0);
\draw[->,blue] (0,0.7) .. controls (0.4,0.5) .. (V);
\draw[->,blue] (V) .. controls (1.4,0.5) .. (1.8,0.7);
\draw[->,blue] (1.8,1.4) .. controls (0.9,0.2) .. (0,1.4);
    \draw (0,-0.7) node { $f_{s|\eta_l=\eta_k} \delta_{kj}X_{il}$};
    \end{tikzpicture}-
\begin{tikzpicture}[scale=0.8]
    \draw (0,1.4) node[left] {\tiny $j$};
    \draw (0,.7) node[left] {\tiny $i$};
    \draw (1.8,1.4) node[right] {\tiny $k$};
    \draw (1.8,.7) node[right] {\tiny $l$};
\draw (0.9,0) node[v] (V) {};
\draw [->] (0,0) -- (1.8,0);
\draw[->,blue] (0,0.7) .. controls +(0.5,-0.5) and +(-0.5,-0.5) .. (1.8,0.7);
\draw[->,blue] (1.8,1.4) .. controls +(-0.6,-0.7) .. (V);
\draw[->,blue] (V) .. controls +(-0.3,0.7) .. (0,1.4);
    \draw (0,-0.7) node { $(f_{s|\eta_l=\eta_i}+(-1)^{\eta_i\eta_j+\eta_k\eta_i+\eta_j\eta_k} )\delta_{il}X_{kj}$};
  \end{tikzpicture}-\\
  - \Biggl( 
\begin{tikzpicture}[scale=0.8]
    \draw (0,1.4) node[left] {\tiny $j$};
    \draw (0,.7) node[left] {\tiny $i$};
    \draw (1.8,1.4) node[right] {\tiny $k$};
    \draw (1.8,.7) node[right] {\tiny $l$};
\draw (0.9,0) node[v] (V) {};
\draw [->] (0,0) -- (1.8,0);
\draw[->,red] (1.8,0.7) .. controls +(-0.6,-0.4) and (0.6,0.9) .. (0,1.4);
\draw[<-,red] (1.8,1.4) .. controls (1.2,0.7) .. (V);
\draw[<-,blue] (V) .. controls (0.6,0.5) .. (0,0.7);
    \draw (0,-0.7) node { $(f_{s|\eta_l=\eta_j}+(-1)^{\eta_k\eta_l} )\delta_{\bar l j}X_{i\bar k}$};
  \end{tikzpicture}-
\begin{tikzpicture}[scale=0.8]
    \draw (0,1.4) node[left] {\tiny $j$};
    \draw (0,.7) node[left] {\tiny $i$};
    \draw (1.8,1.4) node[right] {\tiny $k$};
    \draw (1.8,.7) node[right] {\tiny $l$};
\draw (0.9,0) node[v] (V) {};
\draw [->] (0,0) -- (1.8,0);
\draw[->,red] (0,0.7) .. controls (0.6,0.5) and +(-0.5,-0.7) .. (1.8,1.4);
\draw[->,blue] (V) .. controls (0.6,0.9) .. (0,1.4);
\draw[<-,red] (V) .. controls (1.2,0.5) .. (1.8,0.7);
    \draw (0,-0.7) node { $(f_{s|\eta_k=\eta_i}+(-1)^{\eta_i\eta_j+\eta_j\eta_l} )\delta_{i\bar k}X_{\bar lj}$};
  \end{tikzpicture}
  \Biggr)
\end{align*}\\

The difference of the linear terms is given by the sum of the terms $\epsilon_k\epsilon_{\bar k}$ appearing during the switch of the direction of edges.

The difference of the quadratic terms depends on crossing only. When you switch the direction of an edge which means changing $X_{ij}$ to $X_{\bar j\bar i}$,  the only difference of quadratic terms in the sign function is $\eta_i\eta_j$. It just matches the linear relation $X_{ij}=-(-1)^{\eta_i\eta_j}\epsilon_{\bar i}\epsilon_j X_{\bar j\bar i}$. \qed

\section{Computations}
Let $A_n$ be the space of chord diagrams with $n$ chords modulo $4$-term relations. Then the weight systems $w_\fgl$ and $\fso$ can be thought of as linear functions on $A_n$. The table below gives some results from computer experiments showing how strong these weight systems are.
$$\begin{array}{c|ccccccccccccccccc}
N&1&2&3&4&5&6&7\\\hline
\dim A(N)&1&2&3&6&10&19&33\\
\dim \bigl(\mathop{\rm ker} w_\fgl|_{A(N)}\bigr)&0&0&0&0&0&1&4\\
\dim \bigl(\mathop{\rm ker} w_\fgl|_{A(N)}\cap \mathop{\rm ker} w_\fso|_{A(N)}\bigr) &0&0&0&0&0&1&3
\end{array}
$$
This table shows that up to degree $5$ the weight system $w_{\fgl}$ contains all possible weight systems. In degree~$6$ there is a unique linear combination of chord diagrams, up to a factor, which is nontrivial modulo $4$-term relations but $w_{\fgl}$ vanishes on it. A straightforward verification shows that $w_{\fso}$ vanishes on it as well. In degree $7$, there are $4$ linearly independent linear combinations of chord diagrams in the kernel of~$w_{\fgl}$. The weight system $w_{\fso}$ is not trivial on this kernel. Therefore, there is an element $h\in A_7$ such that $w_{\fgl}(h)=0$  but $w_\fso(h)\ne 0$. This shows that the universal weight system $w_\fso$  while being much weaker than the $w_{\fgl}$ weight system is however independent and cannot be induced from $w_\fgl$. 

In order to represent this computation more explicitly, we recall that $A=\bigoplus A_k$ is a polynomial algebra with the number of generators of degrees $1,2,3,\dots$ equal to $1,1,1,2,3,5,8,\dots$, see~\cite{Chmutov2012}. One of the possible choices for the generators in degrees $\le7$ is as follows
\def\cd#1#2{\def\factor{180/#1}\tikz[scale=0.6]{\draw (0,0) circle [radius=1]; #2}}
\def\chord(#1,#2);{
\coordinate (->) at ($(0,0)!1!\factor*(#1-1):(1,0)$);
\coordinate (b) at ($(0,0)!1!\factor*(#2-1):(1,0)$);
\draw[blue] (->) .. controls ($0.35*(->) + 0.35*(b)$) .. (b);}
\begin{align*}
p_{1}&=\cd{1}{\chord(1,2);},
\qquad\qquad p_{2}=\cd{2}{\chord(1,3);\chord(2,4);},
\qquad\qquad p_{3}=\cd{3}{\chord(1,4);\chord(2,5);\chord(3,6);},
\\ p_{4,1}&=\cd{4}{\chord(1,3);\chord(2,5);\chord(4,7);\chord(6,8);},
\quad p_{4,2}=\cd{4}{\chord(1,4);\chord(2,7);\chord(3,6);\chord(5,8);},
\\p_{5,1}&=\cd{5}{\chord(1,3);\chord(2,5);\chord(4,7);\chord(6,9);\chord(8,10);},
\quad p_{5,2}=\cd{5}{\chord(1,3);\chord(2,6);\chord(4,9);\chord(5,8);\chord(7,10);},
\quad p_{5,3}=\cd{5}{\chord(1,4);\chord(2,9);\chord(3,6);\chord(5,8);\chord(7,10);},
\\p_{6,1}&=\cd{6}{\chord(1,3);\chord(2,5);\chord(4,7);\chord(6,9);\chord(8,11);\chord(10,12);},
\quad p_{6,2}=\cd{6}{\chord(1,3);\chord(2,5);\chord(4,8);\chord(6,11);\chord(7,10);\chord(9,12);},
\quad p_{6,3}=\cd{6}{\chord(1,3);\chord(2,6);\chord(4,11);\chord(5,8);\chord(7,10);\chord(9,12);},
\quad p_{6,4}=\cd{6}{\chord(1,4);\chord(2,11);\chord(3,6);\chord(5,8);\chord(7,10);\chord(9,12);},
\quad p_{6,5}=\cd{6}{\chord(1,4);\chord(2,8);\chord(3,6);\chord(5,11);\chord(7,10);\chord(9,12);},
\\p_{7,1}&=\cd{7}{\chord(1,3);\chord(2,5);\chord(4,7);\chord(6,9);\chord(8,11);\chord(10,13);\chord(12,14);},
\quad p_{7,2}=\cd{7}{\chord(1,3);\chord(2,5);\chord(4,7);\chord(6,10);\chord(8,13);\chord(9,12);\chord(11,14);},
\quad p_{7,3}=\cd{7}{\chord(1,3);\chord(2,5);\chord(4,8);\chord(6,13);\chord(7,10);\chord(9,12);\chord(11,14);},
\quad p_{7,4}=\cd{7}{\chord(1,3);\chord(2,6);\chord(4,13);\chord(5,8);\chord(7,10);\chord(9,12);\chord(11,14);},
\\&\hskip5.8em p_{7,5}=\cd{7}{\chord(1,4);\chord(2,13);\chord(3,6);\chord(5,8);\chord(7,10);\chord(9,12);\chord(11,14);},
\quad p_{7,6}=\cd{7}{\chord(1,3);\chord(2,6);\chord(4,10);\chord(5,8);\chord(7,13);\chord(9,12);\chord(11,14);},
\quad p_{7,7}=\cd{7}{\chord(1,4);\chord(2,7);\chord(3,6);\chord(5,10);\chord(8,13);\chord(9,12);\chord(11,14);},
\quad p_{7,8}=\cd{7}{\chord(1,4);\chord(2,8);\chord(3,6);\chord(5,13);\chord(7,10);\chord(9,12);\chord(11,14);}.
\end{align*}

Applying straightforwardly the recurrence relations and using computer we find explicitly the values of the $\fgl$ and $\fso$ weight systems on these generators, for example,
\begin{align*}
w_{\fgl}(p_1)&=C_2,&w_{\fgl}(p_2)&=C_1^2+C_2^2-C_0 C_2,
\\w_{\fso}(p_1)&=C_2
&w_{\fso}(p_2)&=C_2^2-2\,C_0 C_2+4 C_2
\\[2ex]
w_{\fgl}(p_3)&=C_2^3 -2\,C_0 C_2^2 +C_0^2 C_2 +2\,C_1^2 C_2 -C_0 C_1^2, \hskip-10em
\\ w_{\fso}(p_3)&=C_2^3 -4\,C_0 C_2^2 +8\,C_2^2 +4\,C_0^2 C_2 -16\,C_0 C_2 +16\,C_2, \hskip-10em
\end{align*}
and so on. Define
\begin{multline*}
h=3\,p_{1}^5p_{2}-15\,p_{1}^3p_{2}^2 -87\,p_{1}p_{2}^3 -6\,p_{1}^4p_{3}  +255\,p_{1}^2p_{2}p_{3}-12\,p_{2}^2 p_{3} -171\,p_{1}p_{3}^2
\\-99\,p_{1}^3p_{4,1} +133\,p_{3}p_{4,1} +3\,p_{1}^3p_{4,2} -28\,p_{3}p_{4,2} +21\,p_{1}p_{2}p_{4,2}
+45\,p_{1}^2p_{5,1} -24\,p_{1}^2p_{5,2}
\\+18\,p_{1}^2p_{5,3} -12\,p_{2}p_{5,3} +17\,p_{1}p_{6,1} +\,p_{1}p_{6,2}
-30\,p_{1}p_{6,3} -8\,p_{1}p_{6,4} +20\,p_{1}p_{6,5}
\\-46\,p_{7,1} +11\,p_{7,2} +18\,p_{7,3} +8\,p_{7,4} -6\,p_{7,5} -20\,p_{7,6} +5\,p_{7,7} +6\,p_{7,8}\in A_7
\end{multline*}
Then, substituting the computed values of the corresponding weight systems we get 
\begin{align*}
w_{\fgl}(h)&=0,\quad\text{while}\\
w_{\fso}(h)&=192\; C_0\; (C_0-6)\,  (-24\,C_2 +20\,C_0C_2 -4\,C_0^2C_2  
\\&\hskip 15 em
-2\,C_2^2 -4\,C_0C_2^2 +C_0^2C_2^2 +16\,C_4 -2\,C_0C_4).
\end{align*}

\section*{Concluding remarks and perspectives}
\begin{itemize}
\item  So far, we give all the Lie algebra weight system correspond for all the infinite Complex simple Lie algebra series, which are all the classical Lie algebras $\fsl(N)(\fgl(N))$, $\fso(N)$ and $\fsp(2M)$. Also we give all the Lie superalgebra weight system correspond for all the infinite Complex simple Lie superalgebra series $\fsl(N|M)(\fgl(N|M))$ and $w\fosp(N|2M)$ except the strange Lie superalgebra series $\fp(N)$ and $\fq(N)$. The construction in this paper can be also extend to $\fp(N)$ and $\fq(N)$ by some generalization. Lie superalgebra $\fp(N)$ has trivial center of the universal enveloping algebra $U(\fp(N))$, so the correspond the $\fp(N)$ weight system on permutations is also trivial. The $\fq(N)$ weight system on permutations is not trivial, but it takes value $0$ on all chord diagrams, and even on all permutations having at least one cycle of even length.  The paper \cite{KY} is in preparation.

\item  What is the combinatorial meaning of the generalized Vassiliev relation? Are there natural context in the knot theory or its variations where these kind of relations appear naturally?

\item  At the moment, we have three independent examples of polynomial invariant of permutations satisfying the generalised 4-term relations: $w_{\fgl}$, $w_{\fso}$, and $w_{\fq}$. Are there other examples?
\end{itemize}

\appendix
\section{Perelomov-Popov formula for Casimirs}

The Perelomov-Popov formula~\cite{Perelomov-Popov} (and its generalizations for the cases of Lie superalgebras~\cite{NR}) expresses Casimir elements as explicitly given elements in the ring of (super)symmetric functions. In particular, it allows one to express Casimir elements in terms of those of them which are algebraically independent. In this Appendix we review this formula for the case of~$\fosp(N|2M)$ Lie superalgebra following~\cite{NR}. We present it in a uniform way compatible with the change of dimensions~$N$ and~$2M$. In particular, with an appropriate choice of notation we make no distinction between the cases when $N$ is odd or even. As a corollary, we obtain a relation expressing odd Casimirs in terms of even ones, and this relation is universal in the sense that it does not depend on the dimensions~$N$ and~$2M$.

In all studied cases $\fg=\fso(N)$, $\fsp(2M)$ or $\fosp(N|2M)$ the corresponding universal enveloping algebra $U\fg$ admits the direct sum decomposition
$$
U\fg=(\mathfrak{n}_-\;U\fg+U\fg\; \mathfrak{n}_+)\oplus U\mathfrak{h},
$$
where $\mathfrak{n}_-$, $\mathfrak{h}$ and $\mathfrak{n}_+$ are the subalgebras of~$\fg$ spanned by $X_{ij}$ with $i>j$, $i=j$, and $i<j$, respectively. The Harish-Chandra homomorphism can be defined as the projection to the second summand of this decomposition. Its restriction to the center $ZU\fg$ is injective which allows one to identify this center with certain subalgebra in the algebra of polynomials in the generators $X_{i,i}$. The Perelomov-Popov formula describes the Casimir elements through this identification. It expresses the Casimir elements as (super)symmetric polynomials in the squares of the properly shifted generators $X_{i,i}$. More explicitly, let us collect Casimir elements into the following generating series in (the inverse powers of) the variable $u$
$$
F(u)=1-\Bigl(\frac{u-\frac{C_0-1}{2}}{u-\frac{C_0-2}{2}}\Bigr)\;\sum_{m=0}^\infty C_m u^{-m-1}.
$$
Then the analog of Perelomov-Popov formula for this case derived in~\cite{NR} can be represented in the form
\begin{equation}\label{eq:PP}
F(u)=\frac{P\bigl(u-\frac{C_0}{2}\bigr)}{P\bigl(u-\frac{C_0-2}{2}\bigr)},
\end{equation}
where $P$ is a rational function given by an explicit formula depending on the chosen algebra $\fg$.

\paragraph{$\fg=\fso(N)$:}  Set $x_i=X_{i,i}+\frac{N}{2}-i$, $i=1,\dots,[N/2]$.
Then we have
$$
P(u)=\begin{cases} \phantom{u\,}\prod_{i=1}^{\frac{N}{2}}(u^2-x_i^2),&N\text{ is even},\\[1ex]
 u\,\prod_{i=1}^{\frac{N-1}{2}}(u^2-x_i^2),&N\text{ is odd}.
\end{cases}
$$

\paragraph{$\fg=\fsp(2M)$:}  Set $x_i=X_{i,i}-i+M+1$. $i=1,\dots,M$,
Then we have
$$
P(u)=\frac{1}{\prod_{i=1}^{M}(u^2-x_i^2)}.
$$

\paragraph{$\fg=\fosp(N|2M)$:}  Set $\sigma_i=(-1)^{\eta_i}$ and $x_i=X_{i,i}+r_i$, $i=1,\dots,M+[N/2]$, where
$$
r_i=\sigma _i\left(\frac{1}{2} (N-2 M)-\sum _{j=1}^{i-1} \sigma _j\right)-\frac{1}{2} \left(\sigma _i+1\right).
$$
Then we have
$$
P(u)=\begin{cases} \phantom{u\,}\prod_{i=1}^{M+\frac{N}{2}}(u^2-x_i^2)^{\sigma_i},&N\text{ is even},\\[1ex]
 u\,\prod_{i=1}^{M+\frac{N-1}{2}}(u^2-x_i^2)^{\sigma_i},&N\text{ is odd}.
\end{cases}
$$

In all the cases the function $P(u)$ is either even or odd so that $P(u)/P(-u)=\pm 1$. This implies the following identity. 

\begin{proposition}
 There are universal polynomial expressions independent of~$N$ and $2M$ for odd Casimir elements of the Lie superalgebra $\fosp(N|2M)$ in terms of even ones. More explicitly, these relations can be expressed as the following equality for the generating series for Casimirs    
$$
F(u)F(C_0-1-u)=1.
$$
\end{proposition}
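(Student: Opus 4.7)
The approach is to derive the identity $F(u)F(C_0-1-u)=1$ directly from the Perelomov--Popov formula~\eqref{eq:PP}, exploiting only the fact that the rational function $P(u)$ appearing there has a definite parity under $u\mapsto -u$. In all three cases $\fso(N)$, $\fsp(2M)$, and $\fosp(N|2M)$ each factor $(u^2-x_i^2)^{\sigma_i}$ with $\sigma_i=\pm1$ is manifestly even in $u$, so $P(-u)=(-1)^{\epsilon}P(u)$, where $\epsilon\in\{0,1\}$ records the presence of the extra prefactor $u$ when $N$ is odd. The point is that this sign is independent of $u$ and of the spectral parameters $x_i$, which is what will make the resulting identity universal in $N$ and $M$.

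Concretely, I would set $w=u-C_0/2$, so that~\eqref{eq:PP} reads
\[
F(u)\;=\;\frac{P(w)}{P(w+1)}.
\]
The substitution $u\mapsto C_0-1-u$ sends $w\mapsto -(w+1)$, hence
\[
F(C_0-1-u)\;=\;\frac{P(-w-1)}{P(-w)}\;=\;\frac{(-1)^\epsilon P(w+1)}{(-1)^\epsilon P(w)}\;=\;\frac{P(w+1)}{P(w)},
\]
and multiplying by $F(u)$ gives $F(u)F(C_0-1-u)=1$. The cancellation of the two $(-1)^{\epsilon}$ factors is the heart of the argument and is the reason the identity does not see the specific parity of $N$ nor the individual values of~$M$ and the $x_i$.

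To turn this functional equation into universal polynomial expressions for the odd Casimirs, I would expand both sides as formal power series in $u^{-1}$ and equate coefficients. Because $F(u)=1-\bigl(\tfrac{u-(C_0-1)/2}{u-(C_0-2)/2}\bigr)\sum_{m\geq 0}C_m\,u^{-m-1}$ carries each $C_m$ linearly at leading order, the $u^{-m-1}$ coefficient of $F(u)F(C_0-1-u)-1$ is a polynomial in $C_0,C_1,\dots,C_m$ whose leading term (when $m$ is odd) is a nonzero scalar multiple of $C_m$; solving recursively yields the closed-form relations expressing every odd $C_{2k+1}$ as a polynomial in $C_0$ and $C_2,C_4,\dots,C_{2k}$, recovering in low degrees the formulas of Remark~\ref{rem:Casimirs}. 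The main obstacle I foresee is bookkeeping rather than conceptual: one must verify that the claimed parity $P(-u)=(-1)^\epsilon P(u)$ really holds in the mixed-sign $\fosp(N|2M)$ case, where the exponents $\sigma_i$ can be negative. This is fine because each base $(u^2-x_i^2)$ is even and raised to an integer power, so the product is even regardless of the signs of the $\sigma_i$, and only the prefactor $u^\epsilon$ contributes to the overall parity.
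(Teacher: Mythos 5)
Your proposal is correct and follows essentially the same route as the paper, which disposes of the identity in one sentence by noting that $P$ is either even or odd so that $P(u)/P(-u)=\pm1$; your substitution $w=u-C_0/2$, the observation that $u\mapsto C_0-1-u$ sends $w\mapsto-(w+1)$, and the cancellation of the two parity signs is exactly the computation the paper leaves implicit. The concluding expansion in powers of $u^{-1}$, extracting $-2C_m$ plus lower-order terms from the coefficient of $u^{-m-1}$ for odd $m$, is likewise the intended way to obtain the universal polynomial expressions of Remark~\ref{rem:Casimirs}.
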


The first few of these relations are present in Remark~\ref{rem:Casimirs}.


\begin{thebibliography}{9}
\bibitem{bar1995vassiliev}
Dror Bar-Natan,
\newblock \emph{On the Vassiliev knot invariants},
\newblock Topology, 34(2):423--472, 1995.
\newblock (an updated version available at \url{http://www.math.toronto.edu/~drorbn/papers}).

\bibitem{Chmutov2012}
S.~Chmutov, S.~Duzhin, and J.~Mostovoy,
\newblock {\em Introduction to {Vassiliev Knot Invariants}},
\newblock Cambridge University Press, May 2012.

\bibitem{FKV97}
Figueroa-O'Farrill, T.~Kimura, A.~Vaintrob,
\newblock {\it The Universal Vassiliev Invariant for the Lie Superalgebra $\fgl(1|1)$}
\newblock Comm. Math. Phys.,
\newblock 1997, 185, 93--127

\bibitem{KL} M.~Kazarian, S.~Lando, Weight systems and invariants of graphs and embedded graphs, Russian Mathematical Surways, 2022, 77(5)

\bibitem{KY} M.~Kazarian, Zhuoke Yang, Extended weight systems on permutations associated with the Lie superalgebras $\fp(N)$ and $\fq(N)$, \emph{in preparation}.

\bibitem{kon1993}
M.~Kontsevich,
\newblock \emph{Vassiliev knot invariants},
\newblock in: Advances in Soviet Math., 16(2):137--150, 1993.

\bibitem{lando2013graphs}
Sergei~K. Lando and Alexander~K. Zvonkin.
\newblock {\em Graphs on surfaces and their applications}, volume 141.
\newblock Springer Science \& Business Media, 2013.

\bibitem{Perelomov-Popov}
Perelomov,~Askol'D.~Mikhailovich, and Vladimir~S.~Popov. 
\newblock {\em Casimir operators for semisimple Lie groups}
\newblock Mathematics of the USSR-Izvestiya 2.6 (1968): 1313

\bibitem{nwachuku1979}
Nwachuku C O.
\newblock{\em New expressions for the eigenvalues of the invariant operators of O(N) and Sp(2n)}
\newblock Journal of Mathematical Physics, 1979, 20(6): 1260-1266.

\bibitem{NR} C.O.Nwachuku, M.A. Rashid, New expressions for the eigenvalues of the invariant operators of the general linear and orthosymplectic Lie superalgebras, J. Math. Phys. 26, 1914 (1985)

\bibitem{Y1} Zhuoke Yang,
\newblock {\it New approaches to $\fgl(N)$ weight system},
arXiv:2202.12225 (2022)

\bibitem{Y2} 
Zhuoke Yang,
\newblock \emph{On the Lie superalgebra $\fgl(N|N)$ weight system},
\newblock {Journal of Geometry and Physics. 2023. Vol. 187. Article 104808.}

\bibitem{Zh} 
Zhelobenko~D.P.
\newblock \emph{Compact Lie groups and their representations},
\newblock Nauka, Moscow, 1970.
\newblock English translation: Translations of mathematical monographs, v. 40.
\newblock American Mathematical Society, Providence, Rhode Island, 1973.

\end{thebibliography}
\end{document}